\def\ps@pprintTitle{%
  \let\@oddhead\@empty
  \let\@evenhead\@empty
  \def\@oddfoot{\reset@font\hfil\thepage\hfil}
  \let\@evenfoot\@oddfoot
}
\newtheorem{lemma}{Lemma}
\newtheorem{remark}{Remark}
\newtheorem{theorem}{Theorem}
\numberwithin{equation}{section}
\begin{document}

%
%
%
%
%
%
%
%
\begin{frontmatter}
\title{Approximation properties of Chlodowsky variant of $(p,q)$ Bernstein-Stancu-Schurer operators}
\author[label1,label2]{Vishnu Narayan Mishra}
\ead{vishnunarayanmishra@gmail.com,
vishnu\_narayanmishra@yahoo.co.in}
\address[label1]{Department of Applied Mathematics \& Humanities,
Sardar Vallabhbhai National Institute of Technology, Ichchhanath Mahadev Dumas Road, Surat -395 007 (Gujarat), India}
\address[label2]{L. 1627 Awadh Puri Colony Beniganj,
Phase -III, Opp. Industrial Training Institute, Ayodhya Main Road, Faizabad-224 001,
(Uttar Pradesh), India } 
\author[label3, label*]{M. Mursaleen}
\ead{mursaleenm@gmail.com}
\address[label3]{Department of Mathematics, Aligarh Muslim University, Aligarh-202002, India}
\fntext[label*]{Corresponding author}
\author[label1]{Shikha Pandey}
\ead{sp1486@gmail.com}
\begin{abstract}
In the present paper, we introduce the Chlodowsky variant of $(p,q)$ Bernstein-Stancu-Schurer operators which is a generalization of $(p,q)$ Bernstein-Stancu-Schurer operators. We have also discussed its Korovkin type approximation properties and  rate of convergence.
\end{abstract}
\begin{keyword}
$(p,q)$-integers; $(p,q)$-Bernstein operators; $(p,q)$ Bernstein-Stancu-Schurer operators; modulus of continuity; linear positive operators; Korovkin type approximation theorem.\\
\textit{$2000$ Mathematics Subject Classification:} Primary $41A25$, $41A36$, $41A10$,$41A30$.
\end{keyword}
\end{frontmatter}
\section{Introduction and preliminaries}
In 1912, S.N. Bernstein \cite{snbern} introduced the following sequence of operators $B_n : C[0, 1] \rightarrow C[0, 1]$ defined for any $n \in \mathbb{N}$ and for any function $f \in C[0, 1],$
\begin{equation}
B_n(f;x)=\sum_{k=0}^n \left(\begin{array}{c} n \\ k \end{array} \right) x^k (1-x)^{n-k} f\left(\dfrac{k}{n}\right), ~~~~ x\in[0,1].
\end{equation}
Later various generalizations of these operators were discovered. It has been proved as a powerful tool for numerical analysis, computer aided geometric design and solutions of differential equations. In last two decades, the applications of $q$-calculus has played an important role in the area of approximation theory, number theory and theoretical physics. In 1987, Lupa\cb s \cite{lupas} and in 1997, Phillips \cite{phillip1} introduced a sequence of Bernstein polynomials based on $q$-integers and investigated its approximation properties. Several researchers obtained various other generalizations of operators based on $q$-calculus. For any function  $f \in C[0, 1]$  the $q$-form of Bernstein operator is described by Lupa\cb s \cite{lupas} as
\begin{equation}
L_{n,q}(f;x)=\sum_{k=0}^n \dfrac{\left[\begin{array}{c} n \\ k \end{array} \right]_q q^{k(k-1)/2}x^k (1-x)^{n-k}}{\prod_{j=0}^{n-1}(1-x+q^jx)}f\left(\dfrac{[k]_q}{[n]_q}\right)~~~~ x\in[0,1].
\end{equation}
In 1932, Chlodowsky \cite{Ibikli} presented a generalization of Bernstein polynomials on an unbounded set, known as Bernstein-Chlodowsky polynomials given by,
\begin{equation}
B_n(f,x)=\sum_{k=0}^n f(\dfrac{k}{n}b_n)\left(\begin{array}{c} n \\ k \end{array} \right) \left(\dfrac{x}{b_n}\right)^k \left(1-\dfrac{x}{b_n}\right)^{n-k}, ~~~~~ 0\leq x\leq b_n,
\end{equation}
where $b_n$ is  an increasing sequence of positive terms with the
properties $b_n \rightarrow \infty$ and $\frac{b_n}{n} \rightarrow 0$ as $n \rightarrow \infty.$\\
In 2008, Karsli and Gupta \cite{qChlodowsky} expressed the $q$-analogue of Bernstein-Chlodowsky polynomials by
\begin{equation}
C_n(f;q;x)=\sum_{k=0}^n \left[\begin{array}{c} n \\ k \end{array} \right]_q \left(\dfrac{x}{b_n}\right)^k \prod_{s=0}^{n-k-1} (1-q^s \dfrac{x}{b_n})f\left(\dfrac{[k]_q}{[n]_q}b_n\right), ~~~~~ 0\leq x\leq b_n,
\end{equation}
where $b_n$ is  an increasing sequence of positive values, with the
properties $b_n \rightarrow \infty$ and $\frac{b_n}{[n]_q} \rightarrow 0$ as $n \rightarrow \infty$.\vspace{.5 cm} \\
Recently, Mursaleen et al. \cite{bernpq,bernpqstancu,bernpqschurer} proposed and analysed approximation properties for $(p,q)$ analogue of Bernstein operators , Bernstein-Stancu operators and Bernstein-Schurer operators. Besides this, we
also refer to some recent related work on this topic: e.g. (\cite{vnm}, \cite{opt1}, \cite{acta}, \cite{opt2}, \cite{saha}).
Now, we recall certain notations of $(p,q)$-calculus.\\
For $0<q < p <1$, the $(p,q)$ integer $[n]_{p,q}$ is described by
\begin{equation*}
[n]_{p,q} := \dfrac{p^n-q^n}{p-q}. 
\end{equation*}
$(p,q)$ factorial is expressed as
\begin{equation*}
[n]_{p,q}!=[n]_{p,q} [n-1]_{p,q} [n-2]_{p,q} \ldots 1.
\end{equation*}
$(p,q)$ binomial coefficient is expressed as
\begin{equation*}
\left[\begin{array}{c} n \\ k \end{array} \right]_{p,q} := \dfrac{[n]_{p,q}!}{[k]_{p,q}![n-k]_{p,q}!}.
\end{equation*}
$(p,q)$ binomial expansion as
\begin{equation*}
(ax+by)_{p,q}^n:=\sum_{k=0}^n\left[\begin{array}{c} n \\ k \end{array} \right]_{p,q}a^{n-k}b^k x^{n-k}y^k.
\end{equation*}
$$(x+y)_{p,q}^n := (x+y) (px+qy) (p^2x+q^2y)\ldots (p^{n-1}x+q^{n-1}y).$$
In 2015, Mursleen et al \cite{bernpqstancuRevised}, investigated $(p,q)$ form of Bernstein-Stancu operator, which is given by
\begin{equation}
S_{n}^{(\alpha,\beta)}(f;x,p,q)=\frac{1}{p^\frac{n(n-1)}{2}}\sum_{k=0}^{n} \left[\begin{array}{c} n \\ k \end{array} \right]_{p,q}p^\frac{k(k-1)}{2} x^k \prod_{s=0}^{n+m-k-1} (p^s-q^sx) f \left(\dfrac{p^{n-k}[k]_{p,q}+\alpha}{[n]_{p,q}+\beta}\right),
\end{equation}
where $\alpha, \beta$  are non-negative integers and $f \in C[0, 1]$, $x \in [0,1]$ and $0 \leq \alpha \leq \beta$.\vspace{.5 cm}\\ For the first few moments, we have the following lemma:
\begin{lemma} \label{L1}(See \cite{bernpqstancuRevised}) For the Operators $S_{n}^{(\alpha,\beta)},$  we have
\begin{enumerate}
\item $S_{n}^{(\alpha,\beta)}(1;x,p,q)=1$,
\item $S_{n}^{(\alpha,\beta)}(t;x,p,q)=\dfrac{[n]_{p,q}x+\alpha}{[n]_{p,q}+\beta}$,
\item $S_{n}^{(\alpha,\beta)}(t^2;x,p,q)=\dfrac{1}{([n]_{p,q}+\beta)^2}(q[n]_{p,q}[n-1]_{p,q}x^2+[n]_{p,q}(2\alpha + p^{n-1})x+\alpha^2).$
\end{enumerate}
\end{lemma}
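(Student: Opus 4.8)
The plan is to reduce all three identities to three fundamental sums and evaluate those. Writing the summand weights as
\[
s_{n,k}(x)=\frac{1}{p^{n(n-1)/2}}\left[\begin{array}{c} n \\ k \end{array}\right]_{p,q}p^{k(k-1)/2}x^{k}\prod_{s=0}^{n-k-1}(p^{s}-q^{s}x),
\]
the operator takes the form $S_{n}^{(\alpha,\beta)}(f;x,p,q)=\sum_{k=0}^{n}s_{n,k}(x)\,f\!\left(\frac{p^{n-k}[k]_{p,q}+\alpha}{[n]_{p,q}+\beta}\right)$. Since the arguments are affine in $p^{n-k}[k]_{p,q}$, the three test functions $1,t,t^{2}$ are controlled by
\[
\Sigma_{0}=\sum_{k=0}^{n}s_{n,k}(x),\qquad \Sigma_{1}=\sum_{k=0}^{n}p^{n-k}[k]_{p,q}\,s_{n,k}(x),\qquad \Sigma_{2}=\sum_{k=0}^{n}\bigl(p^{n-k}[k]_{p,q}\bigr)^{2}s_{n,k}(x).
\]
Once I show $\Sigma_{0}=1$, $\Sigma_{1}=[n]_{p,q}x$ and $\Sigma_{2}=q[n]_{p,q}[n-1]_{p,q}x^{2}+p^{n-1}[n]_{p,q}x$, parts (1)--(3) follow by linearity, since $S_{n}^{(\alpha,\beta)}(t;x,p,q)=(\Sigma_{1}+\alpha\Sigma_{0})/([n]_{p,q}+\beta)$ and $S_{n}^{(\alpha,\beta)}(t^{2};x,p,q)=(\Sigma_{2}+2\alpha\Sigma_{1}+\alpha^{2}\Sigma_{0})/([n]_{p,q}+\beta)^{2}$.

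For $\Sigma_{0}=1$ I would establish the $(p,q)$ partition-of-unity identity $\sum_{k=0}^{n}\left[\begin{array}{c} n \\ k \end{array}\right]_{p,q}p^{k(k-1)/2}x^{k}\prod_{s=0}^{n-k-1}(p^{s}-q^{s}x)=p^{n(n-1)/2}$, either by induction on $n$ using the $(p,q)$-Pascal relation for $\left[\begin{array}{c} n \\ k \end{array}\right]_{p,q}$, or directly from the $(p,q)$-binomial expansion recalled in the preliminaries.

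For $\Sigma_{1}$ the engine is the absorption identity $[k]_{p,q}\left[\begin{array}{c} n \\ k \end{array}\right]_{p,q}=[n]_{p,q}\left[\begin{array}{c} n-1 \\ k-1 \end{array}\right]_{p,q}$, which annihilates the $k=0$ term and factors $[n]_{p,q}$ out. After the shift $k\mapsto k+1$ and a careful reconciliation of the powers $p^{n-k}$, $p^{k(k-1)/2}$ and the prefactor $p^{n(n-1)/2}$ against their degree-$(n-1)$ counterparts, the residual sum is exactly the partition of unity of order $n-1$, giving $\Sigma_{1}=[n]_{p,q}x$.

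The second moment is where the real bookkeeping sits. I would use the recurrence $[k]_{p,q}=p^{k-1}+q[k-1]_{p,q}$ to write $[k]_{p,q}^{2}=p^{k-1}[k]_{p,q}+q[k]_{p,q}[k-1]_{p,q}$. Because $p^{2(n-k)}p^{k-1}=p^{n-1}\cdot p^{n-k}$, the first piece collapses immediately to $p^{n-1}\Sigma_{1}=p^{n-1}[n]_{p,q}x$. The second piece needs the twofold absorption $[k]_{p,q}[k-1]_{p,q}\left[\begin{array}{c} n \\ k \end{array}\right]_{p,q}=[n]_{p,q}[n-1]_{p,q}\left[\begin{array}{c} n-2 \\ k-2 \end{array}\right]_{p,q}$ followed by the double shift $k\mapsto k+2$; the surviving sum is the order-$(n-2)$ partition of unity multiplied by $x^{2}$, and the accumulated powers of $p$ must be checked to cancel exactly (they contribute $p^{0}$), yielding $q[n]_{p,q}[n-1]_{p,q}x^{2}$. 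I expect this exponent-matching in the $x^{2}$ term --- tracking $p^{2(n-k)}$, the shifted $p^{k(k-1)/2}$, and the prefactor through the double index shift until the residual sum is literally recognizable as $\Sigma_{0}$ at the reduced order --- to be the main obstacle; the combinatorial absorption identities themselves are routine consequences of the definitions of $[n]_{p,q}$ and $\left[\begin{array}{c} n \\ k \end{array}\right]_{p,q}$.
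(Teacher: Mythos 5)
Your proposal is correct: the reduction to $\Sigma_0,\Sigma_1,\Sigma_2$, the absorption identities, and the exponent bookkeeping all check out (in $\Sigma_1$ the powers combine as $p^{n-k-1+k(k+1)/2}=p^{n-1}p^{k(k-1)/2}$ and in $\Sigma_2$ as $p^{2(n-k-2)+(k+2)(k+1)/2}=p^{2n-3}p^{k(k-1)/2}$, each cancelling the prefactor exactly). The paper itself only cites this lemma from the literature, but its proof of the directly analogous Lemma \ref{L2} proceeds by exactly your method --- absorption plus index shift reducing to the partition of unity, with the recurrence $[k+1]_{p,q}=p^{k}+q[k]_{p,q}$ handling the second moment --- so yours is essentially the same approach, differing only in whether the split of $[k]_{p,q}^{2}$ is performed before or after the index shift.
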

\section{Construction of the operators}
Considering the revised form of $(p,q)$ analogue of Bernstein operators (see \cite{bernRevised}), we construct the Chlodowsky variant of $(p,q)$ Bernstein-Stancu-Schurer operators as
\begin{equation} \label{1}
C_{n,m}^{(\alpha,\beta)}(f;x,p,q)=\frac{1}{p^\frac{n(n-1)}{2}}\sum_{k=0}^{n+m} \left[\begin{array}{c} n+m \\ k \end{array} \right]_{p,q} p^\frac{k(k-1)}{2}\left(\dfrac{x}{b_n}\right)^k \prod_{s=0}^{n+m-k-1} (p^s-q^s\dfrac{x}{b_n}) f \left(\dfrac{p^{n-k}[k]_{p,q}+\alpha}{[n]_{p,q}+\beta}b_n\right),
\end{equation}
where $n\in\mathbb{N}$, $m,\alpha,\beta \in\mathbb{N}_0$ with $0 \leq \alpha \leq \beta$, $0 \leq x \leq b_n$, $0<q < p<1.$ Evidently, $C_{n,m}^{(\alpha,\beta)}$
is a linear and positive operator. Consider the case, if $p,q \rightarrow 1$ and $m= 0$ in (\ref{1}), then it will reduce to the Stancu-Chlodowsky polynomials \cite{stancuChlodo}.\vspace{.5 cm} \\
To begin with, we obtain the following lemma, which will be used throughout the paper.
\begin{lemma}\label{L2} Let $C_{n,m}^{(\alpha,\beta)}(f;x,p,q)$ be given by (\ref{1}). The first few moments of the operators are
\begin{enumerate}
\item[(i)] $C_{n,m}^{(\alpha,\beta)}(1;x,p,q)=1,$
\item[(ii)] $C_{n,m}^{(\alpha,\beta)}(t;x,p,q)=\dfrac{[n+m]_{p,q}x+\alpha b_n}{[n]_{p,q}+\beta},$
\item[(iii)] $C_{n,m}^{(\alpha,\beta)}(t^2;x,p,q)=\dfrac{1}{([n]_{p,q}+\beta)^2}(q[n+m]_{p,q}[n+m-1]_{p,q}x^2+[n+m]_{p,q}(2\alpha+p^{n-1})b_n x+\alpha^2 b_n^2),$
\item[(iv)] $C_{n,m}^{(\alpha,\beta)}((t-x);x,p,q)=\left(\dfrac{[n+m]_{p,q}}{[n]_{p,q}+\beta}-1\right)x+\dfrac{\alpha b_n}{[n]_{p,q}+\beta},$
\item[(v)] \begin{eqnarray*}
 C_{n,m}^{(\alpha,\beta)}((t-x)^2;x,p,q) &=&\Biggl(1-2\frac{[n+m]_{p,q}}{([n]_{p,q}+\beta)}+\frac{q[n+m]_{p,q}[n+m-1]_{p,q}}{([n]_{p,q}+\beta)^2} \Biggl)x^2\\ & &+
   \Biggl( \frac{(2\alpha+p^{n-1})[n+m]_{p,q}}{([n]_{p,q}+\beta)}-2\alpha\Biggl) \dfrac{b_n}{([n]_{p,q}+\beta)} x +\dfrac{\alpha^2 b_n^2}{([n]_{p,q}+\beta)^2}.
\end{eqnarray*}

\end{enumerate} 
\end{lemma}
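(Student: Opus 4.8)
The plan is to establish (i)--(iii) directly from $(p,q)$-binomial summation identities and then read off (iv) and (v) for free by linearity. Indeed, since $C_{n,m}^{(\alpha,\beta)}$ is linear we have $C_{n,m}^{(\alpha,\beta)}((t-x);x,p,q)=C_{n,m}^{(\alpha,\beta)}(t;x,p,q)-x\,C_{n,m}^{(\alpha,\beta)}(1;x,p,q)$ and $C_{n,m}^{(\alpha,\beta)}((t-x)^2;x,p,q)=C_{n,m}^{(\alpha,\beta)}(t^2;x,p,q)-2x\,C_{n,m}^{(\alpha,\beta)}(t;x,p,q)+x^2\,C_{n,m}^{(\alpha,\beta)}(1;x,p,q)$, so substituting (i)--(iii) and collecting the coefficients of $x^2$, $x$ and the constant yields the two displayed formulas after elementary algebra. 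Throughout I abbreviate $y=x/b_n\in[0,1]$ and $N=n+m$, so that the summation in (\ref{1}) runs over the degree-$N$ revised $(p,q)$-Bernstein basis evaluated at $y$, while the Chlodowsky rescaling only replaces the sample values $f(\cdot)$ by $f(b_n\,\cdot\,)$, producing an overall factor $b_n$ for the first moment and $b_n^2$ for the second.

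For (i) I would invoke the normalization identity for the revised $(p,q)$-Bernstein basis,
\[
\sum_{k=0}^{N}\left[\begin{array}{c} N \\ k \end{array}\right]_{p,q}p^{\frac{k(k-1)}{2}}y^{k}\prod_{s=0}^{N-k-1}\bigl(p^{s}-q^{s}y\bigr)=p^{\frac{N(N-1)}{2}},
\]
which is proved by induction on $N$ (splitting off the last factor $p^{N-1}-q^{N-1}y$ of the product) or quoted from \cite{bernRevised}. Combining this with the prefactor then gives $C_{n,m}^{(\alpha,\beta)}(1;x,p,q)=1$.

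For (ii) and (iii) I would write the node as $\frac{b_n}{[n]_{p,q}+\beta}\bigl(p^{n-k}[k]_{p,q}+\alpha\bigr)$ and expand. The terms carrying $\alpha$ (resp.\ $\alpha^2$) collapse to $\alpha\,C_{n,m}^{(\alpha,\beta)}(1;x,p,q)$ (resp.\ $\alpha^2\,C_{n,m}^{(\alpha,\beta)}(1;x,p,q)$) and are disposed of by (i). The term $p^{n-k}[k]_{p,q}$ is handled with the index-lowering identity $[k]_{p,q}\left[\begin{array}{c} N \\ k \end{array}\right]_{p,q}=[N]_{p,q}\left[\begin{array}{c} N-1 \\ k-1 \end{array}\right]_{p,q}$, after which the shift $k\mapsto k-1$ returns the sum to the level-$(N-1)$ normalization identity and produces the factor $[n+m]_{p,q}$. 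For the quadratic moment the new ingredient is the linearization $[k]_{p,q}^{2}=p^{k-1}[k]_{p,q}+q\,[k]_{p,q}[k-1]_{p,q}$ together with the double-lowering identity $[k]_{p,q}[k-1]_{p,q}\left[\begin{array}{c} N \\ k \end{array}\right]_{p,q}=[N]_{p,q}[N-1]_{p,q}\left[\begin{array}{c} N-2 \\ k-2 \end{array}\right]_{p,q}$; the shift $k\mapsto k-2$ then reduces the $q\,[k]_{p,q}[k-1]_{p,q}$ piece to the level-$(N-2)$ identity, supplying the coefficient $q\,[n+m]_{p,q}[n+m-1]_{p,q}$ of $x^2$, while the $p^{k-1}[k]_{p,q}$ piece, after absorbing the node factor $p^{2(n-k)}$, feeds into the linear coefficient and accounts for the $p^{n-1}$ term.

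The main obstacle is entirely the bookkeeping of the powers of $p$: one must verify that the global normalization $p^{-n(n-1)/2}$, the basis weight $p^{k(k-1)/2}$, the node factors $p^{n-k}$ and $p^{2(n-k)}$, and the residual powers generated by each index shift telescope exactly, so that after restoring $x=b_n y$ the surviving $p$-powers reduce precisely to the constants appearing in (ii) and (iii). This is the point where the asymmetric roles of $n+m$ (arising from the summation and the lowering identities) and of $n$ (retained in the normalization, the node power, and the denominator $[n]_{p,q}+\beta$) must be reconciled; it also explains why the degree-dependent factors match Lemma~\ref{L1} with $n\mapsto n+m$ while the $p^{n-1}$ term and the $[n]_{p,q}+\beta$ denominator keep the index $n$. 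Once (i)--(iii) are confirmed, (iv) and (v) follow immediately by the linearity argument above.
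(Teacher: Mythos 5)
Your strategy coincides with the paper's: establish (i)--(iii) by splitting the node $\frac{p^{n-k}[k]_{p,q}+\alpha}{[n]_{p,q}+\beta}b_n$, apply the lowering identity $[k]_{p,q}\left[\begin{smallmatrix} N \\ k \end{smallmatrix}\right]_{p,q}=[N]_{p,q}\left[\begin{smallmatrix} N-1 \\ k-1 \end{smallmatrix}\right]_{p,q}$ with an index shift so as to reach lower-degree normalization sums, and obtain (iv)--(v) by linearity; your linearization $[k]_{p,q}^{2}=p^{k-1}[k]_{p,q}+q[k]_{p,q}[k-1]_{p,q}$ is exactly the paper's use of $[k+1]_{p,q}=p^{k}+q[k]_{p,q}$ after the shift. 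So the route is not different; the question is whether the execution is complete.

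It is not, and the gap sits precisely where you park it as ``bookkeeping.'' Your normalization identity is correct: the degree-$N$ sum equals $p^{N(N-1)/2}$ with $N=n+m$. But the operator (\ref{1}) carries the prefactor $p^{-n(n-1)/2}$, not $p^{-(n+m)(n+m-1)/2}$, so combining your identity with that prefactor gives
\[
C_{n,m}^{(\alpha,\beta)}(1;x,p,q)=p^{\frac{(n+m)(n+m-1)}{2}-\frac{n(n-1)}{2}}=p^{\frac{m(2n+m-1)}{2}},
\]
which equals $1$ only when $m=0$ (or $p=1$): your step (i) contradicts the very identity you invoke. The mismatch propagates. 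Carrying out the shift in (ii) honestly, $p^{j(j+1)/2}\,p^{n-j-1}=p^{j(j-1)/2}\,p^{n-1}$, and evaluating the resulting degree-$(n+m-1)$ sum by the same identity, leaves the coefficient $\frac{[n+m]_{p,q}}{[n]_{p,q}+\beta}\,p^{m(2n+m-3)/2}$ on $x$, not $\frac{[n+m]_{p,q}}{[n]_{p,q}+\beta}$. In other words, the powers of $p$ do \emph{not} telescope for the operator as printed; they telescope only if the definition is made Schurer-consistent, with prefactor $p^{-(n+m)(n+m-1)/2}$ and node factor $p^{n+m-k}$ in place of $p^{n-k}$, in which case (i)--(v) do come out as stated. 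To be fair, the paper's own proof commits the identical oversight: it asserts that the degree-$(n+m)$ sum times $p^{-n(n-1)/2}$ equals $1$, and later that the degree-$(n+m-1)$ sum times $p^{-(n-1)(n-2)/2}$ equals $1$. So you have faithfully reproduced the paper's argument together with its flaw; but a proof that defers the one computation on which the lemma actually hinges --- and which, for the operator as defined, fails --- is incomplete.
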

\begin{proof} (i)
$$C_{n,m}^{(\alpha,\beta)}(1;x,p,q)=\frac{1}{p^\frac{n(n-1)}{2}}\sum_{k=0}^{n+m} \left[\begin{array}{c} n+m \\ k \end{array} \right]_{p,q} p^\frac{k(k-1)}{2}\left(\dfrac{x}{b_n}\right)^k \prod_{s=0}^{n+m-k-1} (p^s-q^s\dfrac{x}{b_n})=1,$$
(ii)\begin{eqnarray*}
&&C_{n,m}^{(\alpha,\beta)}(t;x,p,q)=\frac{1}{p^\frac{n(n-1)}{2}}\sum_{k=0}^{n+m} \left[\begin{array}{c} n+m \\ k \end{array} \right]_{p,q} p^\frac{k(k-1)}{2}\left(\dfrac{x}{b_n}\right)^k \prod_{s=0}^{(n+m-k-1)} (p^s-q^s\dfrac{x}{b_n}) \left(\dfrac{p^{n-k}[k]_{p,q}+\alpha}{[n]_{p,q}+\beta}b_n\right)\\&& \hspace{.2 cm} =
\frac{[n+m]_{p,q}}{p^\frac{n(n-3)}{2}([n]_{p,q}+\beta)}\sum_{k=0}^{n+m-1} \left[\begin{array}{c} n+m-1 \\ k \end{array} \right]_{p,q} p^\frac{k(k+1)}{2}\left(\dfrac{x}{b_n}\right)^{k+1} \prod_{s=0}^{(n+m-k-2)} (p^s-q^s\dfrac{x}{b_n}) \left(\dfrac{b_n}{p^{k+1}}\right)\\ &&\hspace{.4 cm}+ \frac{1}{p^\frac{n(n-1)}{2}}\sum_{k=0}^{n+m} \left[\begin{array}{c} n+m \\ k \end{array} \right]_{p,q} p^\frac{k(k-1)}{2}\left(\dfrac{x}{b_n}\right)^k \prod_{s=0}^{(n+m-k-1)} (p^s-q^s\dfrac{x}{b_n}) \left(\dfrac{\alpha}{[n]_{p,q}+\beta}b_n\right)\\&& \hspace{.2 cm}=
\frac{[n+m]_{p,q}}{([n]_{p,q}+\beta)}x \frac{1}{p^\frac{(n-1)(n-2)}{2}}\sum_{k=0}^{n+m-1} \left[\begin{array}{c} n+m-1 \\ k \end{array} \right]_{p,q} p^\frac{k(k-1)}{2}\left(\dfrac{x}{b_n}\right)^k \prod_{s=0}^{n+m-k-2} (p^s-q^s\dfrac{x}{b_n}) + \frac{\alpha b_n }{([n]_{p,q}+\beta)}\\&& \hspace{.2 cm}=
\frac{[n+m]_{p,q}}{([n]_{p,q}+\beta)}x+\frac{\alpha b_n }{([n]_{p,q}+\beta)},
\end{eqnarray*}
(iii)\begin{eqnarray*}
&& C_{n,m}^{(\alpha,\beta)}(t^2;x,p,q)=\frac{1}{p^\frac{n(n-1)}{2}}\sum_{k=0}^{n+m} \left[\begin{array}{c} n+m \\ k \end{array} \right]_{p,q} p^\frac{k(k-1)}{2}\left(\dfrac{x}{b_n}\right)^k \prod_{s=0}^{(n+m-k-1)} (p^s-q^s\dfrac{x}{b_n}) \left(\dfrac{p^{n-k}[k]_{p,q}+\alpha}{[n]_{p,q}+\beta}b_n\right)^2\\&& \hspace{.2 cm} =
\frac{1}{p^\frac{n(n-1)}{2}}\frac{1}{([n]_{p,q}+\beta)^2}\Biggl[ p^{2n}\sum_{k=0}^{n+m} \left[\begin{array}{c} n+m \\ k \end{array} \right]_{p,q} p^\frac{k(k-1)}{2}\left(\dfrac{x}{b_n}\right)^k \prod_{s=0}^{(n+m-k-1)} (p^s-q^s\dfrac{x}{b_n}) \dfrac{b_n^2[k]_{p,q}^2}{p^{2k}}\\&& \hspace{.2 cm} + 
 2\alpha p^n \sum_{k=0}^{n+m} \left[\begin{array}{c} n+m \\ k \end{array} \right]_{p,q} p^\frac{k(k-1)}{2}\left(\dfrac{x}{b_n}\right)^k \prod_{s=0}^{(n+m-k-1)} (p^s-q^s\dfrac{x}{b_n}) \dfrac{b_n^2[k]_{p,q}}{p^{k}}\\&& \hspace{.2 cm} + 
 \alpha^2 \sum_{k=0}^{n+m} \left[\begin{array}{c} n+m \\ k \end{array} \right]_{p,q} p^\frac{k(k-1)}{2}\left(\dfrac{x}{b_n}\right)^k \prod_{s=0}^{(n+m-k-1)} (p^s-q^s\dfrac{x}{b_n})b_n^2\Biggl] \\&& \hspace{.2 cm}=
\frac{1}{([n]_{p,q}+\beta)^2}\Biggl[\frac{p^{2n}}{p^\frac{n(n-1)}{2}}[n+m]_{p,q}b_n^2 \sum_{k=0}^{n+m-1} \left[\begin{array}{c} n+m-1 \\ k \end{array} \right]_{p,q} p^\frac{k(k+1)}{2}\left(\dfrac{x}{b_n}\right)^{k+1} \prod_{s=0}^{(n+m-k-2)} (p^s-q^s\dfrac{x}{b_n}) \dfrac{[k+1]_{p,q}}{p^{2(k+1)}}\\&& \hspace{.2 cm} +
  \frac{p^{n}}{p^\frac{n(n-1)}{2}}2\alpha [n+m]_{p,q} b_n^2\sum_{k=0}^{n+m-1} \left[\begin{array}{c} n+m-1 \\ k \end{array} \right]_{p,q} p^\frac{k(k+1)}{2}\left(\dfrac{x}{b_n}\right)^{k+1} \prod_{s=0}^{(n+m-k-2)} (p^s-q^s\dfrac{x}{b_n}) \dfrac{1}{p^{(k+1)}} +\alpha^2 b_n^2\Biggl]. 
\end{eqnarray*}
Now using $[k+1]_{p,q}=p^k+q[k]_{p,q}$, we will achieve the result.\\
Using linear property of operators, we have
\begin{eqnarray*}
C_{n,m}^{(\alpha,\beta)}((t-x);x,p,q)&=&C_{n,m}^{(\alpha,\beta)}(t;x,p,q)-xC_{n,m}^{(\alpha,\beta)}(1;x,p,q)\\
&=&\left(\dfrac{[n+m]_{p,q}}{[n]_{p,q}+\beta}-1\right)x+\dfrac{\alpha b_n}{[n]_{p,q}+\beta}.
\end{eqnarray*}
Hence, we get (iv).\\
Similar calculations give
$$
C_{n,m}^{(\alpha,\beta)}((t-x)^2;x,p,q)=C_{n,m}^{(\alpha,\beta)}(t^2;x,p,q)-2xC_{n,m}^{(\alpha,\beta)}(t;x,p,q)+x^2C_{n,m}^{(\alpha,\beta)}(1;x,p,q).$$
Substituting the resluts of (i),(ii) and (iii), we prove the result (v).\end{proof}
\begin{lemma} \label{L3}
For every fixed $p,q\in(0,1)$, we have
\begin{equation*}
\dfrac{[n+m]_{p,q}[n+m-1]_{p,q}}{([n]_{p,q}+\beta)^2}q-2\dfrac{[n+m]_{p,q}}{[n]_{p,q}+\beta}+1 \leq \left(\dfrac{(p^n+q^n)[m]_{p,q}-\beta}{[n]_{p,q}+\beta}\right)^2.
\end{equation*}
\end{lemma}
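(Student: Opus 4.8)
The plan is to clear the common denominator $([n]_{p,q}+\beta)^2$, which both sides share, and reduce the statement to a comparison of the two numerators. Writing $A=[n+m]_{p,q}$, $B=[n+m-1]_{p,q}$ and $D=[n]_{p,q}+\beta$, the left-hand side is $(qAB-2AD+D^2)/D^2$, so it suffices to prove $qAB-2AD+D^2\le((p^n+q^n)[m]_{p,q}-\beta)^2$. The first move I would make is to eliminate the factor $q$ in front of the product $AB$ using the elementary identity $q[n+m-1]_{p,q}=[n+m]_{p,q}-p^{n+m-1}$, which is immediate from $[k]_{p,q}=(p^k-q^k)/(p-q)$. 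This turns $qAB$ into $A^2-p^{n+m-1}A$ and lets me complete the square:
\[
qAB-2AD+D^2=(A-D)^2-p^{n+m-1}A=([n+m]_{p,q}-[n]_{p,q}-\beta)^2-p^{n+m-1}[n+m]_{p,q}.
\]

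Since the correction term $p^{n+m-1}[n+m]_{p,q}$ is nonnegative, it is enough to establish the sharper inequality
\[
([n+m]_{p,q}-[n]_{p,q}-\beta)^2\le((p^n+q^n)[m]_{p,q}-\beta)^2.
\]
To relate the two bases I would invoke the $(p,q)$ addition formula $[n+m]_{p,q}=p^m[n]_{p,q}+q^n[m]_{p,q}$, which yields
\[
(p^n+q^n)[m]_{p,q}-([n+m]_{p,q}-[n]_{p,q})=p^n[m]_{p,q}+(1-p^m)[n]_{p,q}\ge0,
\]
every factor being nonnegative because $0<p<1$. Hence, before subtracting $\beta$, the quantity squared on the left is dominated by the one on the right.

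The hard part will be promoting this linear domination to a domination of the squares once $\beta$ is subtracted from both bases. Abbreviating $S=(p^n+q^n)[m]_{p,q}$ and $\Delta=[n+m]_{p,q}-[n]_{p,q}$, the difference of squares factors as $(S-\beta)^2-(\Delta-\beta)^2=(S-\Delta)(S+\Delta-2\beta)$, so the whole claim reduces to
\[
(S-\Delta)(S+\Delta-2\beta)+p^{n+m-1}[n+m]_{p,q}\ge0.
\]
The factor $S-\Delta\ge0$ is already secured, so the crux is the sign of $S+\Delta-2\beta$: when $[n]_{p,q}$ is large this is manifestly positive and the inequality follows at once, and I expect the genuine difficulty to lie precisely in the borderline parameter ranges where $S+\Delta-2\beta$ is small or negative, which must then be absorbed into the retained nonnegative term $p^{n+m-1}[n+m]_{p,q}$. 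Controlling this sign, using the explicit expression $S-\Delta=p^n[m]_{p,q}+(1-p^m)[n]_{p,q}$ together with a lower estimate on $S+\Delta$ relative to $2\beta$, is the step I would treat most carefully.
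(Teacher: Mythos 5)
Your reduction is correct, and it is actually sharper than what the paper does: the identities $q[n+m-1]_{p,q}=[n+m]_{p,q}-p^{n+m-1}$ and $[n+m]_{p,q}=p^m[n]_{p,q}+q^n[m]_{p,q}$ both hold, and with $S=(p^n+q^n)[m]_{p,q}$, $\Delta=[n+m]_{p,q}-[n]_{p,q}$ they reduce the lemma exactly to
\[
(S-\Delta)(S+\Delta-2\beta)+p^{n+m-1}[n+m]_{p,q}\;\ge\;0,
\qquad S-\Delta=p^n[m]_{p,q}+(1-p^m)[n]_{p,q}\ge 0 .
\]
But your argument stops there: the absorption step you defer ("the step I would treat most carefully") is never carried out, so this is not a proof. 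And the gap is genuine in the strongest sense --- that step cannot be carried out, because the stated inequality is false once $S+\Delta-2\beta$ is sufficiently negative. Take $p=0.9$, $q=0.5$, $n=m=1$, $\beta=2$: then $[1]_{p,q}=1$, $[2]_{p,q}=1.4$, the left side of the lemma equals $\tfrac{(1.4)(0.5)}{9}-\tfrac{2.8}{3}+1\approx 0.144$, while the right side equals $\left(\tfrac{1.4-2}{3}\right)^2=0.04$. In your notation $S-\Delta=1$, $S+\Delta-2\beta=-2.2$, and the retained term $p^{n+m-1}[n+m]_{p,q}=1.26$ is too small to absorb it. So the lemma needs an additional hypothesis (essentially $2\beta\le S+\Delta$, i.e.\ $\beta$ controlled by the other parameters); as stated, for arbitrary $n,m,\beta$ and fixed $p,q\in(0,1)$, no completion of your outline exists.

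It is worth recording how this compares with the paper's own proof, which founders at exactly the point you isolated. The paper discards your term $-p^{n+m-1}[n+m]_{p,q}$ by the cruder bound $q[n+m]_{p,q}[n+m-1]_{p,q}\le[n+m]_{p,q}^2$ (justified there by the claim $[n+m-1]_{p,q}\le[n+m]_{p,q}$, which itself can fail when $p+q<1$, though the weaker bound with the factor $q$ is true), then writes $(\Delta-\beta)^2=(S-\beta-Y)^2$ with $Y=S-\Delta\ge0$ (modulo a sign slip in the algebraic rearrangement), and finally asserts $(S-\beta-Y)^2\le(S-\beta)^2$ "since $0<q<p\le 1$". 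That last assertion requires $0\le Y\le 2(S-\beta)$, which is precisely the sign control you recognized as the crux; in the example above $S-\beta=-0.6<0$ and the assertion is false. So your blind attempt did not merely leave a hole --- it correctly located the hole that the published proof papers over, and which invalidates the lemma in the stated generality.
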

\begin{proof}
Since $[n+m]_{p,q}[n+m-1]_{p,q} \leq [n+m]_{p,q}^2,$ we get
\begin{eqnarray*}
&&\dfrac{[n+m]_{p,q}[n+m-1]_{p,q}}{([n]_{p,q}+\beta)^2}q-2\dfrac{[n+m]_{p,q}}{[n]_{p,q}+\beta}+1  \leq 
\left(\dfrac{[n+m]_{p,q}}{[n]_{p,q}+\beta}-1\right)^2 \\
&&\hspace{2 cm}= \dfrac{1}{([n]_{p,q}+\beta)^2}\left\{ \dfrac{p^{n+m}-q^{n+m}}{p-q}-\dfrac{p^n-q^n}{p-q}-\beta\right\}^2\\
&&\hspace{2 cm}= \dfrac{1}{([n]_{p,q}+\beta)^2}\left\{ \dfrac{(p^{n}+q^{n})(p^m-q^m)-q^n(1-p^m)-p^n(1-q^m)}{p-q}-\beta\right\}^2\\
&&\hspace{2 cm}= \dfrac{1}{([n]_{p,q}+\beta)^2}\left\{(p^{n}+q^{n})[m]_{p,q}-\beta - \dfrac{q^n(1-p^m)+p^n(1-q^m)}{p-q} \right\}^2 \\
&&\hspace{2 cm}\leq   \dfrac{((p^{n}+q^{n})[m]_{p,q}-\beta)^2}{([n]_{p,q}+\beta)^2} \qquad \mathrm{since},~ 0 < q < p \leq 1.
\end{eqnarray*} 
\end{proof}
\begin{remark} As a result of Lemma \ref{L2} and \ref{L3}, we have
\begin{eqnarray*}
&&C_{n,m}^{(\alpha,\beta)}((t-x)^2;x,p,q)\leq \left( \dfrac{((p^{n}+q^{n})[m]_{p,q}-\beta)^2}{([n]_{p,q}+\beta)^2} \right)x^2
+\left( \dfrac{[n+m]_{p,q}(2\alpha+p^{n-1})}{([n]_{p,q}+\beta)}-2 \alpha\right) \frac{b_nx}{([n]_{p,q}+\beta)}\\&&\hspace{5 cm}+\dfrac{\alpha^2 b_n^2}{([n]_{p,q}+\beta)^2}.
\end{eqnarray*}
\end{remark}
\begin{lemma} Taking supremum on $[0,b_n]$ on
Second central moment, we get
\begin{eqnarray*}
C_{n,m}^{(\alpha,\beta)}((t-x)^2;x,p,q) \leq
 b_n^2\Biggl\{ \dfrac{((p^{n}+q^{n})[m]_{p,q}-\beta)^2}{([n]_{p,q}+\beta)^2}+\Biggl|\dfrac{[n+m]_{p,q}(2\alpha+p^{n-1})}{([n]_{p,q}+\beta)^2}-\frac{2 \alpha}{([n]_{p,q}+\beta)}\Biggl|+ \dfrac{\alpha^2}{([n]_{p,q}+\beta)^2}\Biggl\}.
\end{eqnarray*}
\end{lemma}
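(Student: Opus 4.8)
The plan is to begin directly from the inequality recorded in the preceding Remark, which already bounds the second central moment by an explicit quadratic in $x$, and then to maximise that quadratic over $x\in[0,b_n]$. Denoting the three coefficients by
$$A=\dfrac{((p^{n}+q^{n})[m]_{p,q}-\beta)^2}{([n]_{p,q}+\beta)^2},\qquad B=\dfrac{[n+m]_{p,q}(2\alpha+p^{n-1})}{([n]_{p,q}+\beta)^2}-\dfrac{2\alpha}{([n]_{p,q}+\beta)},\qquad C=\dfrac{\alpha^2}{([n]_{p,q}+\beta)^2},$$
the Remark reads $C_{n,m}^{(\alpha,\beta)}((t-x)^2;x,p,q)\le Ax^2+Bb_nx+Cb_n^2$, so the task reduces to bounding the right-hand side uniformly for $0\le x\le b_n$.

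First I would dispose of the leading and constant terms. Since $A$ is a square divided by a square we have $A\ge 0$, and on $[0,b_n]$ one has $x^2\le b_n^2$, hence $Ax^2\le Ab_n^2$; the term $Cb_n^2$ already carries no $x$-dependence and is left untouched. The only delicate point is the linear term $Bb_nx$, because the coefficient $B$ need not have a definite sign. Here I would invoke $0\le x\le b_n$ together with the triangle inequality to estimate $Bb_nx\le |B|\,b_nx\le |B|\,b_n^2$, which is precisely where the absolute value in the stated bound originates.

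Adding the three estimates and factoring out the common $b_n^2$ then yields
$$C_{n,m}^{(\alpha,\beta)}((t-x)^2;x,p,q)\le b_n^2\bigl(A+|B|+C\bigr),$$
which is exactly the claimed inequality once $A$, $B$, $C$ are written out in full. The argument is elementary throughout; the sole subtlety---and the reason the final expression features $|B|$ rather than $B$ itself---is that the sign of $B$ varies with the admissible parameters $n,m,\alpha,\beta$, so that a single bound valid for all of them must be routed through the absolute value of the middle coefficient.
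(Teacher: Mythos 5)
Your proposal is correct and follows exactly the route the paper intends: the paper states this lemma without a written proof, as an immediate consequence of Remark 1 obtained by bounding $x$ by $b_n$ in each term, which is precisely the term-by-term estimate ($Ax^2\le Ab_n^2$, $Bb_nx\le |B|b_n^2$, with the absolute value absorbing the indefinite sign of the middle coefficient) that you carry out. Nothing further is needed.
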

\section{Korovkin-type approximation theorem}
Assume $C_\rho$ is the space of all continuous functions $f$ such that $$|f(x)| \leq M \rho(x),~~~~~~~~~ -\infty<x<\infty.$$ 
Then $C_\rho$ is a Banach space with the norm $$\|f\|_\rho=\sup_{-\infty<x<\infty}\dfrac{|f(x)|}{\rho(x)}.$$
The subsequent results are used for proving Korovkin approximation theorem on unbounded sets.
\begin{theorem} \label{TT1}(See \cite{korovUnbounded}) There exists a sequence of positive linear operators $U_n$, acting from $C_\rho$ to $C_\rho ,$ satisfying the conditions
\begin{enumerate}
\item[(1)] $\lim_{n\rightarrow \infty}\|U_n(1;x)-1\|_\rho=0,$
\item[(2)]$\lim_{n\rightarrow \infty}\|U_n(\phi;x)-\phi\|_\rho=0,$
\item[(3)]$\lim_{n\rightarrow \infty}\|U_n(\phi^2;x)-\phi^2\|_\rho=0,$
\end{enumerate}
where $\phi(x)$ is a continuous and increasing function on $(-\infty,\infty)$ such that $\lim_{x\rightarrow \pm \infty} \phi(x) =
\pm \infty$ and $\rho(x) = 1 + \phi^2$, and there exists a function $f^* \in C_\rho$ for which $\varlimsup_{n \to \infty} \|U_nf^* - f^*\|_\rho >0 $.\end{theorem}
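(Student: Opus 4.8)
Since the assertion is purely an existence statement --- the sharpness counterpart of the weighted Korovkin theorem, showing that the three conditions on $1,\phi,\phi^2$ do not by themselves force convergence on all of $C_\rho$ --- the plan is to exhibit one explicit sequence witnessing the failure. I would first pass to the coordinate $u=\phi(x)$: as $\phi$ is a continuous increasing surjection of $\mathbb{R}$, it is a homeomorphism, $\rho=1+u^2$, and each $f\in C_\rho$ becomes $g(u):=f(\phi^{-1}(u))=(1+u^2)G(u)$ with $G$ bounded. I would then take the two-point convex combinations
\[
U_n(f;x)=\bigl(1-\lambda_n(x)\bigr)f(x)+\lambda_n(x)\,f\bigl(\psi_n(x)\bigr),\qquad \lambda_n(x)\in[0,1],
\]
which are positive and linear by construction and satisfy $U_n(1;x)\equiv1$, so condition (1) holds for free.

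Next I would fix the displacement and its localization. Let $\psi_n(x):=\phi^{-1}\!\bigl(\sqrt{\phi(x)^2+\pi}\bigr)$, so that in the $u$-coordinate the new abscissa is $v=\sqrt{u^2+\pi}$, advancing the phase $u^2$ by exactly $\pi$; and let $\lambda_n$ be a continuous cut-off equal to $1$ near a point $u_n\to\infty$ chosen with $\sin(u_n^2)=1$, and equal to $0$ outside a small neighbourhood of $u_n$. Off the support of $\lambda_n$ the operator is the identity, so all defects vanish there. On the support, $u\asymp u_n\to\infty$, $v-u=\sqrt{u^2+\pi}-u\le \pi/(2u)$, and $v^2-u^2=\pi$; hence the $\rho$-weighted first and second defects $\lambda_n(v-u)/(1+u^2)$ and $\lambda_n\pi/(1+u^2)$ are $O(u_n^{-3})$ and $O(u_n^{-2})$ respectively, and conditions (2) and (3) follow.

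For the failure I would use the single fixed function $f^*(x)=\bigl(1+\phi(x)^2\bigr)\sin\bigl(\phi(x)^2\bigr)$, i.e.\ $g(u)=(1+u^2)\sin(u^2)$, which lies in $C_\rho$ with $\|f^*\|_\rho\le1$. Since $v^2=u^2+\pi$, one has $\sin(v^2)=-\sin(u^2)$, and therefore $g(v)-g(u)=-\sin(u^2)\,(2+u^2+v^2)$. Evaluating at the point with $u=u_n$, where $\sin(u_n^2)=1$ and $\lambda_n=1$, gives
\[
\frac{\bigl|U_n(f^*;x)-f^*(x)\bigr|}{\rho(x)}=\frac{2+2u_n^2+\pi}{1+u_n^2}\longrightarrow 2,
\]
so $\varlimsup_{n\to\infty}\|U_nf^*-f^*\|_\rho\ge2>0$. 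A final routine estimate, $\rho(\psi_n(x))/\rho(x)=1+\pi/(1+u^2)\le 1+\pi$, confirms that $U_n$ indeed maps $C_\rho$ into $C_\rho$.

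The genuine obstacle is the third step: producing one $n$-independent $f^*\in C_\rho$ whose increment across the vanishingly small displacement $v-u\sim\pi/(2u)$ is nonetheless of order $\rho$. This is exactly the tension the theorem is designed to expose --- the displacement must shrink like $1/\phi$ to keep the $\phi$- and $\phi^2$-defects negligible in the weighted norm, while $f^*$ must oscillate at phase $\phi^2$ so that precisely this displacement reverses its sign at amplitude $\rho$. Orchestrating the phase of $f^*$, the quadratic weight, and the localization of $\lambda_n$ so that all three cooperate is where the real content lies; positivity and the verification of (1)--(3) are then routine.
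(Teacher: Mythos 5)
Your construction is correct, and I verified each step: the two\-/point operators $U_n(f;x)=(1-\lambda_n(x))f(x)+\lambda_n(x)f(\psi_n(x))$ are positive and linear with $U_n(1;x)\equiv 1$; on the support of $\lambda_n$ one indeed has $\phi(\psi_n(x))-\phi(x)=\sqrt{u^2+\pi}-u\le \pi/(2u)$ and $\phi^2(\psi_n(x))-\phi^2(x)=\pi$, so the $\rho$-weighted defects in (2) and (3) are $O(u_n^{-3})$ and $O(u_n^{-2})$; the function $f^*$ with $g(u)=(1+u^2)\sin(u^2)$ has $\|f^*\|_\rho=1$, and the sign flip $\sin(v^2)=-\sin(u^2)$ gives $|U_n(f^*;x_n)-f^*(x_n)|/\rho(x_n)=(2+2u_n^2+\pi)/(1+u_n^2)\to 2$, so $\varlimsup_n\|U_nf^*-f^*\|_\rho\ge 2>0$; and $\rho(\psi_n(x))\le(1+\pi)\rho(x)$ shows $U_n$ maps $C_\rho$ into $C_\rho$. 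Be aware, however, that there is nothing in the paper to compare this against: the paper states this theorem purely as a citation to Gadjiev \cite{korovUnbounded} and gives no proof whatsoever, using it only as a tool to derive Theorem \ref{TT3}. So your argument is a genuinely self-contained proof of the sharpness statement, in the classical spirit of such counterexamples: the displacement is tuned so small (order $1/\phi$) that the test functions $1,\phi,\phi^2$ cannot see it in the weighted norm, while the counterexample function oscillates at phase $\phi^2$ so that exactly this displacement reverses its sign at full amplitude $\rho$. Two cosmetic points you should make explicit if this is written up: ``increasing'' must be read as strictly increasing for $\phi^{-1}$ to exist (this is the standard reading of Gadjiev's hypothesis), and the neighbourhoods supporting $\lambda_n$ should be taken of bounded width (say, contained in $[u_n-1,u_n+1]$) so that $u\to\infty$ uniformly on them, which is what your $O(u_n^{-2})$ and $O(u_n^{-3})$ estimates implicitly use.
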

\begin{theorem} \label{TT2} (See \cite{korovUnbounded})
Conditions (1),(2),(3) of above theorem implies that $$\lim_{n\to \infty}\|U_nf-f\|_\rho=0$$ for any function $f$ belonging to the subset $C_\rho^0 := \{ f \in C_\rho : lim_{|x| \to\infty} \frac{|f(x)|}{\rho(x)} ~is~ finite\}$ .
\end{theorem}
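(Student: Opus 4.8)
The statement is the classical Gadzhiev weighted Korovkin theorem, so the plan is to recover it by splitting the real line into a compact core and its tails, treating the core with the ordinary Korovkin machinery and the tails with the defining property of $C_\rho^0$. As a preliminary simplification I would change variables by $u=\phi(x)$; since $\phi$ is a continuous increasing bijection of $\mathbb{R}$ and $\rho=1+\phi^2$, this reduces matters to the case $\phi(x)=x$, $\rho(x)=1+x^2$, with $U_n(1),U_n(t),U_n(t^2)$ converging to $1,t,t^2$ in $\|\cdot\|_\rho$. I would also reduce to $f$ with $\lim_{|x|\to\infty}f(x)/\rho(x)=0$: writing $k_f=\lim_{|x|\to\infty}f(x)/\rho(x)$ (finite by membership in $C_\rho^0$), the function $k_f\rho=k_f(1+\phi^2)$ is handled directly by hypotheses (1) and (3), so it suffices to prove the claim for $g=f-k_f\rho$, which lies in $C_\rho^0$ with limit $0$.

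For the compact core $[-A,A]$ I would first note that $\|U_ne_\nu-e_\nu\|_\rho\to0$ forces $U_ne_\nu\to e_\nu$ uniformly on $[-A,A]$, because $\rho$ is bounded there. Then I would run the standard second-moment argument: decompose $U_n(f;x)-f(x)=U_n(f(t)-f(x);x)+f(x)(U_n(1;x)-1)$, and bound the kernel difference by $|f(t)-f(x)|\le\epsilon+K(\epsilon,A)(t-x)^2$ for all $t\in\mathbb{R}$ and all $x\in[-A,A]$; this inequality is legitimate because $f$ is uniformly continuous near the core while the $C_\rho$ growth bound $|f(t)|\le\|f\|_\rho(1+t^2)$ is dominated by $(t-x)^2$ once $|t-x|$ is large and $x$ is bounded. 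Applying $U_n$ and using $U_n((t-x)^2;x)\to0$ uniformly on $[-A,A]$ then gives $\sup_{|x|\le A}|U_n(f;x)-f(x)|\to0$, hence the same with the weight since $\rho\ge1$.

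The tail $|x|>A$ is where $C_\rho^0$ is indispensable. Using $\lim_{|x|\to\infty}f/\rho=0$ I would choose $A$ so large that $|f(x)|/\rho(x)<\epsilon$ for $|x|>A$ and, crucially, $M_A/\rho(A)<\epsilon$, where $M_A=\sup_{|x|\le A}|f|$; both are possible precisely because $f/\rho\to0$ (which forces $M_A/\rho(A)\to0$ as $A\to\infty$). With the global bound $|f(t)|\le M_A+\epsilon\rho(t)$ and positivity, I would estimate, for $|x|>A$,
\[
\frac{U_n(|f|;x)}{\rho(x)}\le M_A\,\frac{U_n(1;x)}{\rho(x)}+\epsilon\,\frac{U_n(\rho;x)}{\rho(x)}.
\]
Here $U_n(\rho;x)/\rho(x)\to1$ uniformly by (1) and (3), so the second summand is at most $2\epsilon$ for large $n$; and $M_A\,U_n(1;x)/\rho(x)\le M_A/\rho(A)+M_A\|U_n1-1\|_\rho<\epsilon+M_A\|U_n1-1\|_\rho$, whose last term tends to $0$ with $n$ since $A$ (hence $M_A$) is now fixed. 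Combining with $|f(x)|/\rho(x)<\epsilon$ yields $\limsup_n\sup_{|x|>A}\rho(x)^{-1}|U_n(f;x)-f(x)|\le C\epsilon$.

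Putting the two regimes together gives $\limsup_n\|U_nf-f\|_\rho\le C\epsilon$, and letting $\epsilon\to0$ finishes the proof. The main obstacle is the tail estimate: because $U_n(f;x)$ averages $f$ against a kernel spread over all of $\mathbb{R}$, a naive triangle inequality leaves an uncontrolled term of order $\|f\|_\rho$, and the second-moment localization used on the core is useless in the weighted norm because $(t-x)^2$ is itself of size $\rho$. The escape is exactly the hypothesis $f\in C_\rho^0$: after reduction it gives $M_A/\rho(A)\to0$, which is what tames the core mass $M_A\,U_n(1;x)/\rho(x)$ on the tail. This is also the structural reason the theorem must restrict to $C_\rho^0$ and fails on all of $C_\rho$, consistent with the counterexample guaranteed by Theorem \ref{TT1}.
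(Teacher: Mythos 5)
The paper itself offers no proof of this statement at all: Theorem \ref{TT2} is imported verbatim from Gadjiev \cite{korovUnbounded}, so there is no internal argument to compare yours against, and your proposal has to be judged as a self-contained proof. As such it is essentially correct, and it is the standard proof of Gadjiev's theorem: conjugating by $\phi$ to reduce to $\rho(x)=1+x^2$ (note this needs $\phi$ \emph{strictly} increasing so that $\phi^{-1}$ exists; alternatively one can skip the substitution and localize with $(\phi(t)-\phi(x))^2$ directly), subtracting $k_f\rho$ to reduce to $\lim_{|x|\to\infty}f/\rho=0$, then splitting into a compact core, where weighted convergence of the three test functions gives uniform convergence (since $\rho$ is bounded there) and the classical second-moment Korovkin argument applies, and a tail, where $M_A/\rho(A)\to 0$ and $U_n(\rho;x)/\rho(x)\to 1$ control the two terms. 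The quantifier ordering is sound: $A$ is fixed by $\epsilon$ before $n\to\infty$ is taken in both regimes, so the two limsup bounds combine to give $\limsup_n\|U_nf-f\|_\rho\le C\epsilon$.

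One point deserves an explicit flag, because it is not cosmetic. Your reduction step takes $k_f=\lim_{|x|\to\infty}f(x)/\rho(x)$ and asserts it is finite ``by membership in $C_\rho^0$.'' The paper's definition of $C_\rho^0$, however, only requires $\lim_{|x|\to\infty}|f(x)|/\rho(x)$ to be finite, and under that literal definition your $k_f$ need not exist: $f(x)=x|x|$ has $|f|/\rho\to 1$ but $f/\rho\to\pm 1$ as $x\to\pm\infty$. For this $f$ the theorem as literally written is in fact false. Take $U_n(g;x)=(1-\lambda_n(x))g(x)+\lambda_n(x)g(-x)$ with $\lambda_n(x)=\min(1,|x|/n)$: these are positive linear operators on $C_\rho$, satisfy conditions (1) and (3) exactly and condition (2) with $\|U_n t-t\|_\rho\le 2/n$, yet $\|U_nf-f\|_\rho\ge 2n^2/(1+n^2)\to 2$. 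So the absolute value in the paper's displayed definition is a slip for Gadjiev's actual condition (the \emph{signed} limit of $f/\rho$ exists and is finite); your proof implicitly and correctly uses the latter, and your reduction step is precisely the place where the distinction matters. Saying this explicitly would close the only gap between what you assume and what the paper's text literally provides.
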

Consider the weight function $\rho(x)=1+x^2$ and operators:
$$U_{n,m}^{\alpha,\beta}(f;x,p,q)=\begin{cases}
C_{n,m}^{\alpha,\beta}(f;x,p,q)&\mathrm{if~} x \in [0,b_n],\\
f(x)& \mathrm{if~} x \in [0,\infty)/[0,b_n].\\
\end{cases}$$
 Thus for $f \in C_{1+x^2},$ we have
 \begin{align*}
 \|U_{n,m}^{\alpha,\beta}(f;\cdot ,p,q)\| &\leq \sup_{x\in[0,b_n]} \dfrac{|U_{n,m}^{\alpha,\beta}(f;x,p,q)|}{1+x^2}+\sup_{b_n<x<\infty}\dfrac{|f(x)|}{1+x^2} \\
 & \leq \|f\|_{1+x^2} \Biggl[ \sup_{x\in[0,\infty)} \dfrac{|U_{n,m}^{\alpha,\beta}(1+t^2;x,p,q)|}{1+x^2}+1\Biggl].
 \end{align*}

Now, using Lemma \ref{L2} we will obtain,
$$\|U_{n,m}^{\alpha,\beta}(f;\cdot ,p,q)\|_{1+x^2} \leq M \|f\|_{1+x^2}$$
if $0<q < p\leq 1,\lim_{n\to \infty}p^n=\lim_{n\to \infty}q^n=N, N<\infty ~and~ \lim_{n\to \infty}\frac{b_n}{[n]}=0.$
\begin{theorem} \label{TT3}
For all $f\in C_{1+x^2}^0$, we have $$\lim_{n\to \infty}\|U_{n,m}^{\alpha,\beta}(f;\cdot ,p_n,q_n)-f(\cdot)\|_{1+x^2}=0$$
provided that $p:=(p)_n,q:=(q)_n$ with $0<q_n < p_n\leq 1,\lim_{n\to \infty}p_n=1, \lim_{n\to \infty}q_n=1,\lim_{n\to \infty}p_n^n=\lim_{n\to \infty}q_n^n=N, N<\infty, lim_{n\to\infty}[n]_{p_n,q_n}=\infty ~and~ \lim_{n\to \infty}\frac{b_n}{[n]}=0.$
\end{theorem}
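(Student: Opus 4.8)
The plan is to invoke the weighted Korovkin theorem (Theorem~\ref{TT2}), so that it suffices to verify the three limit conditions of Theorem~\ref{TT1} for the operators $U_{n,m}^{\alpha,\beta}(\cdot\,;x,p_n,q_n)$ with the choices $\phi(x)=x$ and $\rho(x)=1+x^2$. Since $U_{n,m}^{\alpha,\beta}$ coincides with the identity on $[0,\infty)\setminus[0,b_n]$, the difference $U_{n,m}^{\alpha,\beta}(\phi^j;x)-\phi^j(x)$ vanishes for $x>b_n$, so each weighted norm reduces to a supremum over $x\in[0,b_n]$, where $U_{n,m}^{\alpha,\beta}=C_{n,m}^{\alpha,\beta}$ and Lemma~\ref{L2} is available. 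Condition~(1) is then immediate, because Lemma~\ref{L2}(i) gives $C_{n,m}^{\alpha,\beta}(1;x,p_n,q_n)=1$ identically, so the corresponding norm is exactly $0$ for every $n$.

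For condition~(2) I would write $U_{n,m}^{\alpha,\beta}(t;x)-x=C_{n,m}^{\alpha,\beta}((t-x);x)$ and use Lemma~\ref{L2}(iv); dividing by $1+x^2$ and using the elementary bounds $\frac{|x|}{1+x^2}\le\frac12$ and $\frac{1}{1+x^2}\le 1$ gives
\[
\bigl\|U_{n,m}^{\alpha,\beta}(t;\cdot)-(\cdot)\bigr\|_{1+x^2}\le\frac12\left|\frac{[n+m]_{p_n,q_n}}{[n]_{p_n,q_n}+\beta}-1\right|+\frac{\alpha\,b_n}{[n]_{p_n,q_n}+\beta}.
\]
The second summand tends to $0$ directly from $b_n/[n]_{p_n,q_n}\to0$. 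For the first I would exploit the identity $[n+m]_{p,q}=p^m[n]_{p,q}+q^n[m]_{p,q}$, which yields $\frac{[n+m]_{p_n,q_n}}{[n]_{p_n,q_n}+\beta}=p_n^m\frac{[n]_{p_n,q_n}}{[n]_{p_n,q_n}+\beta}+\frac{q_n^n[m]_{p_n,q_n}}{[n]_{p_n,q_n}+\beta}$; since $p_n^m\to1$, $[n]_{p_n,q_n}\to\infty$, $q_n^n\to N$ and $[m]_{p_n,q_n}\to m$, this ratio converges to $1$, so condition~(2) follows.

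Condition~(3) proceeds in the same spirit from Lemma~\ref{L2}(iii): after forming $U_{n,m}^{\alpha,\beta}(t^2;x)-x^2$, dividing by $1+x^2$ and applying $\frac{x^2}{1+x^2}\le1$, $\frac{x}{1+x^2}\le\frac12$, $\frac{1}{1+x^2}\le1$, one is left to show that the three coefficients
\[
\left|\frac{q_n[n+m]_{p_n,q_n}[n+m-1]_{p_n,q_n}}{([n]_{p_n,q_n}+\beta)^2}-1\right|,\qquad\frac{[n+m]_{p_n,q_n}(2\alpha+p_n^{n-1})\,b_n}{([n]_{p_n,q_n}+\beta)^2},\qquad\frac{\alpha^2b_n^2}{([n]_{p_n,q_n}+\beta)^2}
\]
all tend to $0$. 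The last two are handled by the factor $b_n/([n]_{p_n,q_n}+\beta)\to0$ together with the boundedness of $2\alpha+p_n^{n-1}$ (note $p_n^{n-1}=p_n^n/p_n\to N$); for the first I would again use $[n+m]_{p,q}=p^m[n]_{p,q}+q^n[m]_{p,q}$ and the analogous formula for $[n+m-1]_{p,q}$ to factor the quotient as $q_n\cdot\frac{[n+m]_{p_n,q_n}}{[n]_{p_n,q_n}+\beta}\cdot\frac{[n+m-1]_{p_n,q_n}}{[n]_{p_n,q_n}+\beta}$, each factor of which converges to $1$.

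The routine part is the estimation using the bounds on $\frac{x}{1+x^2}$ and its relatives; the genuine content, and the step I expect to require the most care, is the asymptotic analysis of the $(p,q)$-integer ratios, namely confirming that the combined hypotheses $p_n,q_n\to1$, $p_n^n,q_n^n\to N<\infty$, $[n]_{p_n,q_n}\to\infty$ and $b_n/[n]_{p_n,q_n}\to0$ force $[n+m]_{p_n,q_n}/([n]_{p_n,q_n}+\beta)\to1$ and the central-moment coefficients to vanish. Once these limits are in place, an appeal to Theorem~\ref{TT2} completes the argument, the assumption $f\in C_{1+x^2}^0$ being precisely what that theorem requires.
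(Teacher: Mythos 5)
Your proposal is correct and follows essentially the same route as the paper: reduce each weighted norm to a supremum over $[0,b_n]$, verify the three Korovkin conditions there using Lemma \ref{L2}(i)--(iii), and conclude by the weighted Korovkin theorem (Theorem \ref{TT2}). In fact your write-up is slightly more careful than the paper's own proof—you justify $[n+m]_{p_n,q_n}/([n]_{p_n,q_n}+\beta)\to 1$ explicitly via the identity $[n+m]_{p,q}=p^m[n]_{p,q}+q^n[m]_{p,q}$ where the paper merely asserts the limit from $p_n,q_n\to 1$ and $[n]_{p_n,q_n}\to\infty$, and you correctly retain the factors $b_n$ and $b_n^2$ in the second-moment estimate (handled by $b_n/[n]_{p_n,q_n}\to 0$), which the paper's displayed bound inadvertently drops.
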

\begin{proof}
Using the results of Theorem \ref{TT1} and Lemma \ref{L2} (i),(ii) and (iii), we will achieve the following
assessments, respectively:
$$\sup_{x\in[0,\infty)}\dfrac{|U_{n,m}^{\alpha,\beta}(1;x,p_n,q_n)-1|}{1+x^2}=\sup_{ 0\leq x \leq b_n}\dfrac{|C_{n,m}^{\alpha,\beta}(1;x,p_n,q_n)-1|}{1+x^2}=0$$
$$\sup_{x \in [0,\infty)}\dfrac{|U_{n,m}^{\alpha,\beta}(t;x,p_n,q_n)-t|}{1+x^2}=\sup_{ 0\leq x \leq b_n}\dfrac{|C_{n,m}^{\alpha,\beta}(t;x,p_n,q_n)-x|}{1+x^2} \leq \sup_{0\leq x \leq b_n}\dfrac{\left|\dfrac{[n+m]_{p_n,q_n}}{[n]_{p_n,q_n}+\beta}-1\right|x+\dfrac{\alpha b_n}{[n]_{p_n,q_n}+\beta}}{1+x^2} $$$$
\leq \left|\dfrac{[n+m]_{p_n,q_n}}{[n]_{p_n,q_n}+\beta}-1\right|+\dfrac{\alpha b_n}{[n]_{p_n,q_n}+\beta} \rightarrow 0$$
and\\
\begin{eqnarray*}
&&\sup_{x\in[0,\infty)}\dfrac{|U_{n,m}^{\alpha,\beta}(t^2;x,p_n,q_n)-t^2|}{1+x^2}=\sup_{0\leq x \leq b_n}\dfrac{|C_{n,m}^{\alpha,\beta}(t^2;x,p_n,q_n)-x^2|}{1+x^2} 
\\&&\hspace{2 cm}\leq  \sup_{ 0\leq x \leq b_n}\dfrac{1}{1+x^2}\Biggl|\frac{(q_n[n+m]_{p_n,q_n}[n+m-1]_{p_n,q_n}x^2  +[n+m]_{p_n,q_n}(2\alpha+p_n^{n-1})b_n x+\alpha^2 b_n^2)}{([n]_{p_n,q_n}+\beta)^2}-x^2 \Biggl|\\
& & \hspace{2 cm}\leq  \Biggl\{\Biggl| \frac{(q_n[n+m]_{p_n,q_n}[n+m-1]_{p_n,q_n}}{([n]_{p_n,q_n}+\beta)^2}-1\Biggl|+\Biggl|\frac{[n+m]_{p_n,q_n}(2\alpha+p_n^{n-1})}{([n]_{p_n,q_n}+\beta)^2}\Biggl|+\frac{\alpha^2}{([n]_{p_n,q_n}+\beta)^2} \Biggl\} \longrightarrow   0
\end{eqnarray*}
whenever $n \to \infty$, because we have $\lim_{n\to \infty}p_n=\lim_{n\to \infty}q_n =1$ and $lim_{n\to\infty}[n]_{p_n,q_n}=\infty$ as $n \to \infty$.
\end{proof}
\begin{theorem}\label{TT4} Assuming $c$ as a positive and real number independent of $n$ and $f$ as a continuous
function which vanishes on $[c,\infty)$. Let $p := (p_n), q := (q_n)$ with $0 < q_n < p_n \leq 1, \lim_{n\to \infty} p_n =\lim_{n\to \infty} q_n= 1,~\lim_{n\to \infty} p_n^n = \lim_{n\to \infty} q_n^n= N < \infty $ and $\lim_{n\to \infty} \frac{b_n^2}{[n]_{p,q}} =0$. Then we have $$\lim_{n\to \infty}\sup_{0 \leq x\leq b_n} \left| C_{n,m}^{\alpha,\beta}(f;x,p_n,q_n)- f(x) \right|=0.$$ 
\end{theorem}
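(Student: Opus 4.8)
The plan is to run the classical Korovkin-type uniform-convergence argument, in which the uniform continuity of $f$ is converted into a bound governed by the second central moment, and then to feed in the supremum estimate for that moment proved in the last lemma of the previous section.

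First I would record two elementary consequences of the hypotheses on $f$: since $f$ is continuous on $[0,\infty)$ and vanishes on $[c,\infty)$, it is bounded, say $M:=\sup_{x\ge 0}|f(x)|<\infty$, and it is uniformly continuous on $[0,\infty)$ (continuity on the compact set $[0,c]$ together with $f\equiv 0$ beyond $c$). Fix $\varepsilon>0$ and choose $\delta>0$ so that $|f(t)-f(x)|<\varepsilon$ whenever $|t-x|<\delta$. For $|t-x|\ge\delta$ one has the crude bound $|f(t)-f(x)|\le 2M\le \frac{2M}{\delta^2}(t-x)^2$, so combining the two cases gives
\[
|f(t)-f(x)|\le \varepsilon+\frac{2M}{\delta^2}(t-x)^2\qquad\text{for all }t,x\ge 0 .
\]

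Next I would apply the operator. Because $C_{n,m}^{\alpha,\beta}(\cdot\,;x,p_n,q_n)$ is linear and positive and reproduces constants (Lemma \ref{L2}(i)), writing $f(x)=C_{n,m}^{\alpha,\beta}(f(x);x,p_n,q_n)$ and using monotonicity yields
\[
\bigl|C_{n,m}^{\alpha,\beta}(f;x,p_n,q_n)-f(x)\bigr|
\le \varepsilon+\frac{2M}{\delta^2}\,C_{n,m}^{\alpha,\beta}\bigl((t-x)^2;x,p_n,q_n\bigr).
\]
Taking the supremum over $x\in[0,b_n]$ and inserting the estimate of the preceding lemma, the right-hand side is dominated by $\varepsilon+\frac{2M}{\delta^2}\,b_n^2\,\eta_n$, where $\eta_n$ denotes the bracketed quantity appearing there.

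It then remains to show $b_n^2\eta_n\to 0$, and this is the step that requires care. Using the identity $[n+m]_{p,q}=p^m[n]_{p,q}+q^n[m]_{p,q}$ one checks that $[n+m]_{p,q}/[n]_{p,q}\to 1$, while $[m]_{p_n,q_n}\to m$ and $p_n^n+q_n^n\to 2N$ under the stated limits. Consequently each of the three summands making up $b_n^2\eta_n$ is $O\!\left(b_n^2/[n]_{p_n,q_n}\right)$ or $O\!\left(b_n^2/[n]_{p_n,q_n}^2\right)$, both of which tend to $0$ by the hypothesis $\lim_{n\to\infty}b_n^2/[n]_{p,q}=0$ together with $[n]_{p_n,q_n}\to\infty$. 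Letting $n\to\infty$ gives $\varlimsup_{n\to\infty}\sup_{0\le x\le b_n}\bigl|C_{n,m}^{\alpha,\beta}(f;x,p_n,q_n)-f(x)\bigr|\le\varepsilon$, and since $\varepsilon>0$ is arbitrary the claimed limit is $0$. The principal obstacle is precisely this last asymptotic bookkeeping: verifying that multiplying the vanishing moment factors by the possibly unbounded $b_n^2$ still produces a null sequence, which is exactly why the strengthened hypothesis $b_n^2/[n]_{p,q}\to 0$ is imposed here (the worst summand, $O\!\left(b_n^2/[n]_{p,q}\right)$, is not controlled by the weaker condition $b_n/[n]\to 0$ used in Theorem \ref{TT3}).
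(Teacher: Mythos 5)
Your proposal is correct and follows essentially the same route as the paper: the uniform-continuity/boundedness inequality $|f(t)-f(x)|\le \varepsilon+\frac{2M}{\delta^2}(t-x)^2$, application of the positive linear operator (which reproduces constants), the supremum bound $b_n^2\eta_n$ for the second central moment on $[0,b_n]$, and the hypothesis $b_n^2/[n]_{p_n,q_n}\to 0$ to kill the remainder. In fact your final asymptotic bookkeeping (showing each summand of $\eta_n$ is $O(1/[n]_{p_n,q_n})$ or $O(1/[n]_{p_n,q_n}^2)$) is spelled out more carefully than in the paper, which simply cites the hypothesis at that point.
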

\begin{proof}
 From the hypothesis on $f$, it is bounded i.e. $|f(x)| \leq M ~(M>0)$. For any $\epsilon >0$, we have
$$\left|f\left(\dfrac{p_n^{n-k}[k]_{p_n,q_n}+\alpha}{[n]_{p_n,q_n}+\beta}b_n \right)-f(x)\right|<\epsilon+\dfrac{2M}{\delta^2}\left(\dfrac{p_n^{n-k}[k]_{p_n,q_n}+\alpha}{[n]_{p_n,q_n}+\beta}b_n-x\right)^2,$$
where $x\in[0,b_n]$ and $\delta=\delta(\epsilon)$ are independent of $n$. Operating with the operator \ref{1}  on both sides,
we can conclude by using Theorem \ref{TT3} and Lemma \ref{L3},
\begin{align*}
\sup_{0 \leq x\leq b_n} \left| C_{n,m}^{\alpha,\beta}(f;x,p_n,q_n)- f(x) \right| \leq \epsilon &+\dfrac{2M}{\delta^2} b_n^2\Biggl\{ \Biggl|1-2\frac{[n+m]_{p_n,q_n}}{([n]_{p_n,q_n}+\beta)}+\frac{q_n[n+m]_{p_n,q_n}[n+m-1]_{p_n,q_n}}{([n]_{p_n,q_n}+\beta)^2} \Biggl|\\  &+
   \Biggl| \frac{(2\alpha+p_n^{n-1})[n+m]_{p_n,q_n}}{([n]_{p_n,q_n}+\beta)}-2\alpha\Biggl| \dfrac{1}{([n]_{p_n,q_n}+\beta)}  +\dfrac{\alpha^2 }{([n]_{p_n,q_n}+\beta)^2} \Biggl\}.
\end{align*}
Since $\frac{b_n^2}{[n]_{p_n,q_n}} =0$ as $n \to \infty,$ we have the desired result.\end{proof}
\section{Rate of Convergence}
We will find the rate of convergence in terms of the Lipschitz class $Lip_M(\gamma )$, for $0 < \gamma \leq 1$. Assume that $C_B[0,\infty)$ denote the space of bounded continuous functions on $[0,\infty)$. A function $f \in C_B[0,\infty)$ belongs to $Lip_M(\gamma )$ if
$$ |f (t) - f (x)| \leq M|t - x|^\gamma,~~~~ t, x \in [0,\infty)$$
is satisfied.\\
\begin{theorem} \label{T1} Let $f \in Lip_M(\gamma )$, then
$$C_{n,m}^{\alpha,\beta}(f;x,p,q)\leq M(\lambda_{n,p,q}(x))^{\gamma /2},$$ where
$\lambda_{n,p,q}(x)=C_{n,m}^{\alpha,\beta}((t-x)^2;x,p,q).$\end{theorem}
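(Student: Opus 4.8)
The plan is to exploit the linearity and positivity of $C_{n,m}^{\alpha,\beta}$ together with the normalization $C_{n,m}^{\alpha,\beta}(1;x,p,q)=1$ furnished by Lemma \ref{L2}(i). Writing $f(t)-f(x)$ inside the operator (legitimate because $f(x)=f(x)\,C_{n,m}^{\alpha,\beta}(1;x,p,q)$) and using that a positive linear operator satisfies the triangle inequality termwise, I would first record the basic estimate
\begin{equation*}
\left| C_{n,m}^{\alpha,\beta}(f;x,p,q)-f(x)\right| \leq C_{n,m}^{\alpha,\beta}\left(\left| f(t)-f(x)\right|;x,p,q\right) \leq M\, C_{n,m}^{\alpha,\beta}\left(\left| t-x\right|^{\gamma};x,p,q\right),
\end{equation*}
where the last inequality invokes the membership $f\in Lip_M(\gamma)$. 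This reduces the theorem to controlling the fractional central moment $C_{n,m}^{\alpha,\beta}(|t-x|^{\gamma};x,p,q)$ by the second central moment $\lambda_{n,p,q}(x)$.

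The core step is a H\"older inequality argument. Since $C_{n,m}^{\alpha,\beta}$ is, by definition \eqref{1}, a finite sum of nonnegative basis functions $P_{n,m,k}(x)\geq 0$ which sum to $1$ (again by Lemma \ref{L2}(i)), the operator acts as a convex combination over the nodes $t_k=\frac{p^{n-k}[k]_{p,q}+\alpha}{[n]_{p,q}+\beta}b_n$. I would then apply H\"older's inequality with conjugate exponents $\frac{2}{\gamma}$ and $\frac{2}{2-\gamma}$ to the factorization
\begin{equation*}
P_{n,m,k}(x)\,|t_k-x|^{\gamma} = \left( P_{n,m,k}(x)\,|t_k-x|^{2}\right)^{\gamma/2}\left( P_{n,m,k}(x)\right)^{1-\gamma/2},
\end{equation*}
and sum over $k$ to obtain
\begin{equation*}
C_{n,m}^{\alpha,\beta}\left(|t-x|^{\gamma};x,p,q\right) \leq \left( C_{n,m}^{\alpha,\beta}\left((t-x)^{2};x,p,q\right)\right)^{\gamma/2}\left( C_{n,m}^{\alpha,\beta}\left(1;x,p,q\right)\right)^{1-\gamma/2}.
\end{equation*}
Because the second factor equals $1$, the right-hand side is precisely $(\lambda_{n,p,q}(x))^{\gamma/2}$, and combining with the first estimate gives the asserted bound.

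The only genuinely delicate point is the choice of exponents in the H\"older step: one must check that $\frac{2}{\gamma}$ and $\frac{2}{2-\gamma}$ are conjugate (their reciprocals $\frac{\gamma}{2}$ and $\frac{2-\gamma}{2}$ sum to $1$) and that both exceed $1$ for every admissible $\gamma\in(0,1]$, so that the inequality applies and the weights $(P_{n,m,k})^{1-\gamma/2}$ recombine correctly. Everything else is bookkeeping: positivity and the normalization $C_{n,m}^{\alpha,\beta}(1;x,p,q)=1$ come directly from Lemma \ref{L2}(i), and no explicit expression for $\lambda_{n,p,q}(x)$ is required since it is kept symbolic throughout. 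I therefore expect the argument to be short once the convex-combination viewpoint and the conjugate exponents are in place.
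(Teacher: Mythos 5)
Your proposal is correct and follows essentially the same route as the paper's own proof: reduce to the fractional moment $C_{n,m}^{\alpha,\beta}(|t-x|^{\gamma};x,p,q)$ via the Lipschitz condition and positivity, then apply H\"older's inequality with conjugate exponents $\tfrac{2}{\gamma}$ and $\tfrac{2}{2-\gamma}$ to the very same factorization of the basis weights, using $C_{n,m}^{\alpha,\beta}(1;x,p,q)=1$ to absorb the second factor. Both arguments in fact establish the bound for $\left|C_{n,m}^{\alpha,\beta}(f;x,p,q)-f(x)\right|$, which is what the theorem's displayed inequality should read.
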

\begin{proof}
Since $f \in Lip_M(\gamma )$, and the operator $C_{n,m}^{\alpha,\beta}(f;x,p,q)$ is linear and monotone,
\begin{eqnarray*}
&&| C_{n,m}^{\alpha,\beta}(f;x,p,q)- f(x)|
= \Biggl|\frac{1}{p^\frac{n(n-1)}{2}}\sum_{k=0}^{n+m} \left[\begin{array}{c} n+m \\ k \end{array} \right]_{p,q} p^\frac{k(k-1)}{2}\left(\dfrac{x}{b_n}\right)^k \prod_{s=0}^{(n+m-k-1)} (p^s-q^s\dfrac{x}{b_n}) \\&&\hspace{6 cm} f \left(\dfrac{p^{n-k}[k]_{p,q}+\alpha}{[n]_{p,q}+\beta}b_n\right)-f(x)\Biggl|\\
&&\hspace{1 cm}\leq \frac{1}{p^\frac{n(n-1)}{2}}\sum_{k=0}^{n+m} \left[\begin{array}{c} n+m \\ k \end{array} \right]_{p,q} p^\frac{k(k-1)}{2}\left(\dfrac{x}{b_n}\right)^k \prod_{s=0}^{(n+m-k-1)} (p^s-q^s\dfrac{x}{b_n}) \Biggl| f \left(\dfrac{p^{n-k}[k]_{p,q}+\alpha}{[n]_{p,q}+\beta}b_n\right)-f(x)\Biggl| \\
&&\hspace{1 cm} \leq  M\frac{1}{p^\frac{n(n-1)}{2}}\sum_{k=0}^{n+m} \left[\begin{array}{c} n+m \\ k \end{array} \right]_{p,q} p^\frac{k(k-1)}{2}\left(\dfrac{x}{b_n}\right)^k \prod_{s=0}^{(n+m-k-1)} (p^s-q^s\dfrac{x}{b_n}) \Biggl|  \left(\dfrac{p^{n-k}[k]_{p,q}+\alpha}{[n]_{p,q}+\beta}b_n\right)-x\Biggl|^\gamma.
\end{eqnarray*}
Using Remark (1) and H\"{o}lder's inequality with the values $p=\frac{2}{\gamma}$ and $q=\frac{2}{2-\gamma}$, we get
\begin{eqnarray*}
&&| C_{n,m}^{\alpha,\beta}(f;x,p,q)- f(x)|\\
&& \leq \frac{M}{p^\frac{n(n-1)}{2}} \sum_{k=0}^{n+m} \Biggl[\Biggl\{\left[\begin{array}{c} n+m \\ k \end{array} \right]_{p,q} p^\frac{k(k-1)}{2} \left(\dfrac{x}{b_n}\right)^k \prod_{s=0}^{(n+m-k-1)} (p^s-q^s\dfrac{x}{b_n}) \left( \left(\dfrac{p^{n-k}[k]_{p,q}+\alpha}{[n]_{p,q}+\beta}b_n\right) -x\right)^2\Biggl\}^{\frac{\gamma}{2}} \\& &\hspace{3 cm} \times \Biggl\{  \left[\begin{array}{c} n+m \\ k \end{array} \right]_{p,q}p^\frac{k(k-1)}{2} \left(\dfrac{x}{b_n}\right)^k \prod_{s=0}^{(n+m-k-1)} (p^s-q^s\dfrac{x}{b_n}) \Biggl\}^{\frac{2-\gamma}{2}} \Biggl]\\
&&\leq 
M  \Biggl[\Biggl\{\frac{1}{p^\frac{n(n-1)}{2}}\sum_{k=0}^{n+m}\left[\begin{array}{c} n+m \\ k \end{array} \right]_{p,q} p^\frac{k(k-1)}{2} \left(\dfrac{x}{b_n}\right)^k \prod_{s=0}^{(n+m-k-1)} (p^s-q^s\dfrac{x}{b_n}) \left( \left(\dfrac{p^{n-k}[k]_{p,q}+\alpha}{[n]_{p,q}+\beta}b_n\right) -x\right)^2\Biggl\}^{\frac{\gamma}{2}} \\& & \hspace{3 cm}\times \Biggl\{\frac{1}{p^\frac{n(n-1)}{2}} \sum_{k=0}^{n+m} \left[\begin{array}{c} n+m \\ k \end{array} \right]_{p,q}p^\frac{k(k-1)}{2} \left(\dfrac{x}{b_n}\right)^k \prod_{s=0}^{(n+m-k-1)} (p^s-q^s\dfrac{x}{b_n}) \Biggl\}^{\frac{2-\gamma}{2}} \Biggl]
\\
&=& M \left[C_{n,m}^{\alpha,\beta}((t-x)^2;x,p,q)\right]^{\frac{\gamma}{2}}\\
&\leq & M (\lambda_{n,p,q}(x))^{\frac{\gamma}{2}}.
\end{eqnarray*}
\end{proof} 
In order to obtain rate of convergence in terms of modulus of continuity $\omega(f;\delta)$, we assume that for any $f \in C_B[0,\infty)$ and $x \geq 0$, modulus of continuity of $f$ is given by
\begin{equation}
\omega(f;\delta)=\max_{\substack
{|t-x|\leq \delta \\ t,x\in [0,\infty)}}|f(t)-f(x)|.
\end{equation}
Thus it implies for any $\delta > 0$
\begin{equation}\label{E2}
|f(x)-f(y)| \leq \omega(f;\delta)\left( \dfrac{|x-y|}{\delta}+1\right).
\end{equation}
\begin{theorem}\label{T2} If $f \in C_B[0,\infty),$ we have 
$$|C_{n,m}^{\alpha,\beta}(f;x,p,q)- f(x)| \leq 2 \omega (f;\sqrt[]{\lambda_{n,p,q}(x)}),$$
where $\omega(f;\cdot)$ is modulus of continuity of $f$ and $\lambda_{n,p,q}(x)$ be the same as in Theorem \ref{T1}.
\end{theorem}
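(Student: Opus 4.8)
The plan is to exploit the normalization $C_{n,m}^{\alpha,\beta}(1;x,p,q)=1$ from Lemma~\ref{L2}(i) together with the linearity and positivity of the operator, reducing the estimate to a single central moment controlled by $\lambda_{n,p,q}(x)$. First I would write $f(x)=f(x)\,C_{n,m}^{\alpha,\beta}(1;x,p,q)$ so that the difference becomes the operator applied to $f(t)-f(x)$, and then pass the absolute value inside using positivity:
\[
\left|C_{n,m}^{\alpha,\beta}(f;x,p,q)-f(x)\right| \leq C_{n,m}^{\alpha,\beta}\!\left(\left|f(t)-f(x)\right|;x,p,q\right).
\]

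Next I would invoke the modulus-of-continuity bound \eqref{E2}, namely $|f(t)-f(x)|\leq \omega(f;\delta)\bigl(\tfrac{|t-x|}{\delta}+1\bigr)$ for arbitrary $\delta>0$, and distribute the operator across the two terms. Using Lemma~\ref{L2}(i) again on the constant term, this yields
\[
\left|C_{n,m}^{\alpha,\beta}(f;x,p,q)-f(x)\right| \leq \omega(f;\delta)\left(\frac{1}{\delta}\,C_{n,m}^{\alpha,\beta}\!\left(|t-x|;x,p,q\right)+1\right).
\]
The one genuine inequality to deploy is the Cauchy--Schwarz inequality (equivalently H\"older with exponents $2,2$) applied to the first-order absolute moment, which gives $C_{n,m}^{\alpha,\beta}(|t-x|;x,p,q)\leq \bigl(C_{n,m}^{\alpha,\beta}((t-x)^2;x,p,q)\bigr)^{1/2}\bigl(C_{n,m}^{\alpha,\beta}(1;x,p,q)\bigr)^{1/2}=\sqrt{\lambda_{n,p,q}(x)}$, where the last equality uses Lemma~\ref{L2}(i) and the definition of $\lambda_{n,p,q}(x)$ from Theorem~\ref{T1}.

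Substituting this bound leaves $\omega(f;\delta)\bigl(\delta^{-1}\sqrt{\lambda_{n,p,q}(x)}+1\bigr)$, and the final move is the standard optimizing choice $\delta=\sqrt{\lambda_{n,p,q}(x)}$, which collapses the parenthetical factor to $2$ and produces exactly $2\,\omega(f;\sqrt{\lambda_{n,p,q}(x)})$. I do not anticipate any real obstacle here: every ingredient (normalization, positivity, the modulus inequality, and Cauchy--Schwarz) is already available, and the argument is entirely formal. The only point requiring mild care is the legitimacy of applying Cauchy--Schwarz at the level of the positive linear functional $g\mapsto C_{n,m}^{\alpha,\beta}(g;x,p,q)$, but this is immediate from its representation as a finite sum with nonnegative weights summing to one, so the estimate is justified directly from the explicit definition \eqref{1}.
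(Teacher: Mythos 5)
Your proposal is correct and follows essentially the same route as the paper: positivity plus the normalization $C_{n,m}^{(\alpha,\beta)}(1;x,p,q)=1$, the modulus bound \eqref{E2}, Cauchy--Schwarz (H\"older with exponents $2,2$) on the first absolute moment, and the optimizing choice of $\delta$; the paper merely writes the same steps out as explicit sums over the basis functions rather than at the level of the positive linear functional. In fact your choice $\delta=\sqrt{\lambda_{n,p,q}(x)}$ is the correct one needed to collapse the factor to $2$ --- the paper's text says ``choosing $\delta=\lambda_{n,p,q}(x)$,'' which is evidently a typo.
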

\begin{proof}
 Using triangular inequality, we get
\begin{eqnarray*}
&&|C_{n,m}^{\alpha,\beta}(f;x,p,q)- f(x)|= \Biggl|\frac{1}{p^\frac{n(n-1)}{2}}\sum_{k=0}^{n+m} \left[\begin{array}{c} n+m \\ k \end{array} \right]_{p,q} p^\frac{k(k-1)}{2}\left(\dfrac{x}{b_n}\right)^k \prod_{s=0}^{(n+m-k-1)} (p^s-q^s\dfrac{x}{b_n}) \\&&\hspace{6 cm} f \left(\dfrac{p^{n-k}[k]_{p,q}+\alpha}{[n]_{p,q}+\beta}b_n\right)-f(x)\Biggl|\\
&&\hspace{1 cm}  \leq  \frac{1}{p^\frac{n(n-1)}{2}}\sum_{k=0}^{n+m} \left[\begin{array}{c} n+m \\ k \end{array} \right]_{p,q} p^\frac{k(k-1)}{2}\left(\dfrac{x}{b_n}\right)^k \prod_{s=0}^{(n+m-k-1)} (p^s-q^s\dfrac{x}{b_n}) \Biggl| f \left(\dfrac{p^{n-k}[k]_{p,q}+\alpha}{[n]_{p,q}+\beta}b_n\right)-f(x)\Biggl| .
\end{eqnarray*}
Now using (\ref{E2}) and H\"{o}lder's inequality, we get
\begin{eqnarray*}
&&|C_{n,m}^{\alpha,\beta}(f;x,p,q)- f(x)|\\
&&\hspace{.5 cm}= \frac{1}{p^\frac{n(n-1)}{2}}\sum_{k=0}^{n+m} \left[\begin{array}{c} n+m \\ k \end{array} \right]_{p,q} p^\frac{k(k-1)}{2}\left(\dfrac{x}{b_n}\right)^k \prod_{s=0}^{(n+m-k-1)} (p^s-q^s\dfrac{x}{b_n}) \Biggl( \frac{ \left|\dfrac{p^{n-k}[k]_{p,q}+\alpha}{[n]_{p,q}+\beta}b_n-x\right|}{\delta}+1\Biggl)\omega(f;\delta)\\
&&\hspace{.5 cm} \leq \omega(f;\delta) \frac{1}{p^\frac{n(n-1)}{2}}\sum_{k=0}^{n+m} \left[\begin{array}{c} n+m \\ k \end{array} \right]_{p,q} p^\frac{k(k-1)}{2}\left(\dfrac{x}{b_n}\right)^k \prod_{s=0}^{(n+m-k-1)} (p^s-q^s\dfrac{x}{b_n})\\&&\hspace{1 cm}+\dfrac{\omega(f;\delta)}{\delta} \frac{1}{p^\frac{n(n-1)}{2}}\sum_{k=0}^{n+m} \left[\begin{array}{c} n+m \\ k \end{array} \right]_{p,q} p^\frac{k(k-1)}{2}\left(\dfrac{x}{b_n}\right)^k \prod_{s=0}^{(n+m-k-1)} (p^s-q^s\dfrac{x}{b_n}) \left|\frac{p^{n-k}[k]_{p,q}+\alpha}{[n]_{p,q}+\beta}b_n-x\right| \\
&&\hspace{.5 cm}= \omega(f;\delta)+\dfrac{\omega(f;\delta)}{\delta}\Biggl\{\frac{1}{p^\frac{n(n-1)}{2}}\sum_{k=0}^{n+m} \left[\begin{array}{c} n+m \\ k \end{array} \right]_{p,q} p^\frac{k(k-1)}{2}\left(\dfrac{x}{b_n}\right)^k \prod_{s=0}^{(n+m-k-1)} (p^s-q^s\dfrac{x}{b_n}) \\&& \hspace{5 cm}\left(\frac{p^{n-k}[k]_{p,q}+\alpha}{[n]_{p,q}+\beta}b_n-x\right)^2\Biggl\}^{\frac{1}{2}}\\
&&\hspace{.5 cm} = \omega(f;\delta)+\dfrac{\omega(f;\delta)}{\delta}\left\{C_{n,m}^{\alpha,\beta}((t-x)^2;x,p,q)\right\}^{1/2}.
\end{eqnarray*}
Now choosing $\delta=\lambda_{n,p,q}(x)$ as in Theorem \ref{T1}, we have
$$|C_{n,m}^{\alpha,\beta}(f;x,p,q)- f(x)| \leq 2 \omega(f;\sqrt{\lambda_{n,p,q}(x)}). $$
\end{proof}
Next we calculate the rate of convergence in terms of the modulus of continuity of the derivative of function.
\begin{theorem}\label{T3}
If $f(x)$ has a continuous bounded derivative $f ^{'}(x)$ and $\omega(f^{'} ; \delta)$ is the modulus
of continuity of $f^{'}(x)$ in $[0,A]$, then
\begin{eqnarray*}
&&|f(x)-C_{n,m}^{\alpha,\beta}(f;x,p,q)|\leq \\
&&~~~~M\Biggl(\left| \dfrac{[n+m]_{p,q}}{[n]_{p,q}+\beta}-1 \right|A+\dfrac{\alpha b_n}{[n]_{p,q}+\beta}\Biggl)+2(B_{n,p,q}(\alpha,\beta))^{1/2} \omega(f^{'};(B_{n,p,q}(\alpha,\beta))^{1/2}),
\end{eqnarray*}
where $M$ is a positive constant such that $|f^{'}(x)| \leq M$.
\end{theorem}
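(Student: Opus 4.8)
The plan is to exploit the continuous bounded derivative through a first-order Taylor-type decomposition combined with the linearity and positivity of $C_{n,m}^{\alpha,\beta}$. Writing
$$f(t)-f(x)=(t-x)f'(x)+\int_x^t\bigl(f'(u)-f'(x)\bigr)\,du,$$
and using $C_{n,m}^{\alpha,\beta}(1;x,p,q)=1$ from Lemma \ref{L2}(i) (so that $C_{n,m}^{\alpha,\beta}(f;x,p,q)-f(x)=C_{n,m}^{\alpha,\beta}(f(t)-f(x);x,p,q)$), I would first split the error by linearity into
$$f'(x)\,C_{n,m}^{\alpha,\beta}\bigl((t-x);x,p,q\bigr)+C_{n,m}^{\alpha,\beta}\!\left(\int_x^t\bigl(f'(u)-f'(x)\bigr)\,du;\,x,p,q\right),$$
a ``first-moment'' term and a ``remainder'' term that are handled separately.

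For the first term I would invoke Lemma \ref{L2}(iv), which gives $C_{n,m}^{\alpha,\beta}((t-x);x,p,q)$ explicitly. Bounding $|f'(x)|\le M$ and applying the triangle inequality together with $x\le A$ produces exactly the term $M\bigl(\bigl|\tfrac{[n+m]_{p,q}}{[n]_{p,q}+\beta}-1\bigr|A+\tfrac{\alpha b_n}{[n]_{p,q}+\beta}\bigr)$, since the additive constant $\tfrac{\alpha b_n}{[n]_{p,q}+\beta}$ is nonnegative. For the remainder, I would use the standard modulus-of-continuity estimate $|f'(u)-f'(x)|\le\omega(f';|u-x|)\le\bigl(1+|t-x|/\delta\bigr)\,\omega(f';\delta)$, valid because $u$ lies between $x$ and $t$ so that $|u-x|\le|t-x|$; integrating gives the pointwise bound $\bigl|\int_x^t(f'(u)-f'(x))\,du\bigr|\le|t-x|\bigl(1+|t-x|/\delta\bigr)\,\omega(f';\delta)$.

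Applying the positive operator to this pointwise bound (monotonicity lets $|C_{n,m}^{\alpha,\beta}(g;x,p,q)|\le C_{n,m}^{\alpha,\beta}(|g|;x,p,q)$) and using linearity yields a combination of $C_{n,m}^{\alpha,\beta}(|t-x|;x,p,q)$ and $C_{n,m}^{\alpha,\beta}((t-x)^2;x,p,q)$. The key technical step is to control the first absolute moment by the second central moment via Cauchy--Schwarz, $C_{n,m}^{\alpha,\beta}(|t-x|;x,p,q)\le\bigl(C_{n,m}^{\alpha,\beta}((t-x)^2;x,p,q)\bigr)^{1/2}$, which again uses $C_{n,m}^{\alpha,\beta}(1;x,p,q)=1$. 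Denoting by $B_{n,p,q}(\alpha,\beta)$ the second central moment bound supplied by the preceding lemma, the remainder is at most $\omega(f';\delta)\bigl(B_{n,p,q}(\alpha,\beta)^{1/2}+\delta^{-1}B_{n,p,q}(\alpha,\beta)\bigr)$, and the calibrated choice $\delta=B_{n,p,q}(\alpha,\beta)^{1/2}$ collapses this to $2\,B_{n,p,q}(\alpha,\beta)^{1/2}\,\omega(f';B_{n,p,q}(\alpha,\beta)^{1/2})$, matching the claimed second term.

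I expect the main obstacle to be bookkeeping rather than conceptual: ensuring the modulus-of-continuity inequality is applied with $u$ genuinely between $x$ and $t$ so that $|u-x|\le|t-x|$, and verifying that the Cauchy--Schwarz estimate on the absolute first moment is legitimate under the positivity of $C_{n,m}^{\alpha,\beta}$. The equalizing choice $\delta=\sqrt{B_{n,p,q}(\alpha,\beta)}$ is precisely what produces the clean factor of $2$ in the final bound.
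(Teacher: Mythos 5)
Your proposal is correct and follows essentially the same route as the paper's proof: isolate the linear term $(t-x)f'(x)$ (bounded via Lemma \ref{L2}(iv) and $|f'|\le M$), control the remainder by $\omega(f';\delta)$ using that the intermediate point lies between $x$ and $t$, apply Cauchy--Schwarz to the absolute first moment, bound the second central moment by $B_{n,p,q}(\alpha,\beta)$, and choose $\delta=B_{n,p,q}(\alpha,\beta)^{1/2}$. The only cosmetic difference is that you write the remainder as the integral $\int_x^t(f'(u)-f'(x))\,du$ where the paper uses the Mean Value Theorem to write it as $(t-x)(f'(\xi)-f'(x))$; the two are interchangeable here and lead to identical estimates.
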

\begin{proof}
Using Mean Value Theorem, we have
\begin{eqnarray*}
f \left(\dfrac{p^{n-k}[k]_{p,q}+\alpha}{[n]_{p,q}+\beta}b_n\right)-f(x) &=&  \left(\dfrac{p^{n-k}[k]_{p,q}+\alpha}{[n]_{p,q}+\beta}b_n-x\right) f^{'}(\xi)
\\&=& \left(\dfrac{p^{n-k}[k]_{p,q}+\alpha}{[n]_{p,q}+\beta}b_n-x\right)f^{'}(x)+ \left(\dfrac{p^{n-k}[k]_{p,q}+\alpha}{[n]_{p,q}+\beta}b_n-x\right)(f^{'}(\xi)-f^{'}(x)),
\end{eqnarray*}
where $\xi$ is a point between $x$ and $\frac{p^{n-k}[k]_{p,q}+\alpha}{[n]_{p,q}+\beta}b_n$. By using the above identity, we get
\begin{eqnarray*}
&&C_{n,m}^{\alpha,\beta}(f;x,p,q)-f(x) \\ &&~~~~
=f^{'}(x) \frac{1}{p^\frac{n(n-1)}{2}}\sum_{k=0}^{n+m} \left[\begin{array}{c} n+m \\ k \end{array} \right]_{p,q} p^\frac{k(k-1)}{2}\left(\dfrac{x}{b_n}\right)^k \prod_{s=0}^{(n+m-k-1)} (p^s-q^s\dfrac{x}{b_n}) \left(\frac{p^{n-k}[k]_{p,q}+\alpha}{[n]_{p,q}+\beta}b_n-x\right)\\&& ~~~~~~+\frac{1}{p^\frac{n(n-1)}{2}}\sum_{k=0}^{n+m} \left[\begin{array}{c} n+m \\ k \end{array} \right]_{p,q} p^\frac{k(k-1)}{2}\left(\dfrac{x}{b_n}\right)^k \prod_{s=0}^{(n+m-k-1)} (p^s-q^s\dfrac{x}{b_n}) \left(\frac{p^{n-k}[k]_{p,q}+\alpha}{[n]_{p,q}+\beta}b_n-x\right)(f^{'}(\xi)-f^{'}(x)).
\end{eqnarray*}
Hence,
\begin{eqnarray*}
&&|C_{n,m}^{\alpha,\beta}(f;x,p,q)-f(x)| \\ &&~~ \leq |f^{'}(x)| |C_{n,m}^{\alpha,\beta}((t-x);x,p,q)| \\ && ~~~~ + 
\frac{1}{p^\frac{n(n-1)}{2}}\sum_{k=0}^{n+m} \left[\begin{array}{c} n+m \\ k \end{array} \right]_{p,q} p^\frac{k(k-1)}{2}\left(\dfrac{x}{b_n}\right)^k \prod_{s=0}^{(n+m-k-1)} (p^s-q^s\dfrac{x}{b_n}) \left|\frac{p^{n-k}[k]_{p,q}+\alpha}{[n]_{p,q}+\beta}b_n-x\right| |f^{'}(\xi)-f^{'}(x)| \\&& ~~ \leq M \Biggl( \left|\dfrac{[n+m]_{p,q}}{[n]_{p,q}+\beta}-1\right| A+\dfrac{\alpha b_n}{[n]_{p,q}+\beta}\Biggl) \\
&&~~~~ + 
\frac{1}{p^\frac{n(n-1)}{2}}\sum_{k=0}^{n+m} \left[\begin{array}{c} n+m \\ k \end{array} \right]_{p,q} p^\frac{k(k-1)}{2}\left(\dfrac{x}{b_n}\right)^k \prod_{s=0}^{(n+m-k-1)} (p^s-q^s\dfrac{x}{b_n}) \left|\frac{p^{n-k}[k]_{p,q}+\alpha}{[n]_{p,q}+\beta}b_n-x\right| |f^{'}(\xi)-f^{'}(x)| \\&& ~~ \leq 
M \Biggl( \left|\dfrac{[n+m]_{p,q}}{[n]_{p,q}+\beta}-1\right| A+\dfrac{\alpha b_n}{[n]_{p,q}+\beta}\Biggl) \\
&&~~~~ + 
\frac{1}{p^\frac{n(n-1)}{2}}\sum_{k=0}^{n+m} \left[\begin{array}{c} n+m \\ k \end{array} \right]_{p,q} p^\frac{k(k-1)}{2}\left(\dfrac{x}{b_n}\right)^k \prod_{s=0}^{(n+m-k-1)} (p^s-q^s\dfrac{x}{b_n}) \left|\frac{p^{n-k}[k]_{p,q}+\alpha}{[n]_{p,q}+\beta}b_n-x\right|\\
&&\hspace{4 cm} \times \omega(f^{'};\delta)\Biggl( \dfrac{\left|\frac{p^{n-k}[k]_{p,q}+\alpha}{[n]_{p,q}+\beta}b_n-x\right|}{\delta}+1 \Biggl),
\end{eqnarray*}
since \begin{equation*}
|\xi-x| \leq \left|\frac{p^{n-k}[k]_{p,q}+\alpha}{[n]_{p,q}+\beta}b_n-x\right|.
\end{equation*}
Using it, we have 
\begin{eqnarray*}
&&|C_{n,m}^{\alpha,\beta}(f;x,p,q)-f(x)| \\ &&~~ \leq M \Biggl( \left|\dfrac{[n+m]_{p,q}}{[n]_{p,q}+\beta}-1\right| A+\dfrac{\alpha b_n}{[n]_{p,q}+\beta}\Biggl) \\
&&~~~~ + 
\omega(f^{'};\delta)\frac{1}{p^\frac{n(n-1)}{2}}\sum_{k=0}^{n+m} \left[\begin{array}{c} n+m \\ k \end{array} \right]_{p,q} p^\frac{k(k-1)}{2}\left(\dfrac{x}{b_n}\right)^k \prod_{s=0}^{(n+m-k-1)} (p^s-q^s\dfrac{x}{b_n}) \left|\frac{p^{n-k}[k]_{p,q}+\alpha}{[n]_{p,q}+\beta}b_n-x\right| \\
&&~~~~ +
\dfrac{\omega(f^{'};\delta)}{\delta}\frac{1}{p^\frac{n(n-1)}{2}}\sum_{k=0}^{n+m} \left[\begin{array}{c} n+m \\ k \end{array} \right]_{p,q} p^\frac{k(k-1)}{2}\left(\dfrac{x}{b_n}\right)^k \prod_{s=0}^{(n+m-k-1)} (p^s-q^s\dfrac{x}{b_n}) \left|\frac{p^{n-k}[k]_{p,q}+\alpha}{[n]_{p,q}+\beta}b_n-x\right|^2.
\end{eqnarray*}
Now using Cauchy-Schwarz inequality for the second term, we obtain
\begin{eqnarray*}
&&|C_{n,m}^{\alpha,\beta}(f;x,p,q)-f(x)| \\ &&~~ \leq M \Biggl( \left|\dfrac{[n+m]_{p,q}}{[n]_{p,q}+\beta}-1\right| A+\dfrac{\alpha b_n}{[n]_{p,q}+\beta}\Biggl) \\
&&~~~~ + 
\omega(f^{'};\delta)\Biggl(\frac{1}{p^\frac{n(n-1)}{2}}\sum_{k=0}^{n+m} \left[\begin{array}{c} n+m \\ k \end{array} \right]_{p,q} p^\frac{k(k-1)}{2}\left(\dfrac{x}{b_n}\right)^k \prod_{s=0}^{(n+m-k-1)} (p^s-q^s\dfrac{x}{b_n}) \left|\frac{p^{n-k}[k]_{p,q}+\alpha}{[n]_{p,q}+\beta}b_n-x\right|^2 \Biggl)^{1/2} \\
&&~~~~ +
\dfrac{\omega(f^{'};\delta)}{\delta}\frac{1}{p^\frac{n(n-1)}{2}}\sum_{k=0}^{n+m} \left[\begin{array}{c} n+m \\ k \end{array} \right]_{p,q} p^\frac{k(k-1)}{2}\left(\dfrac{x}{b_n}\right)^k \prod_{s=0}^{(n+m-k-1)} (p^s-q^s\dfrac{x}{b_n}) \left|\frac{p^{n-k}[k]_{p,q}+\alpha}{[n]_{p,q}+\beta}b_n-x\right|^2.\\
&&~~ =M \Biggl( \left|\dfrac{[n+m]_{p,q}}{[n]_{p,q}+\beta}-1\right| A+\dfrac{\alpha b_n}{[n]_{p,q}+\beta}\Biggl) \\
&&~~~~ + 
\omega(f^{'};\delta)\sqrt{C_{n,m}^{\alpha,\beta}((t-x)^2;x,p,q)}+\dfrac{\omega(f^{'};\delta)}{\delta}C_{n,m}^{\alpha,\beta}((t-x)^2;x,p,q).
\end{eqnarray*}
Using Remark (1), we see
\begin{eqnarray*}
&& \sup_{0\leq x \leq A}   C_{n,m}^{(\alpha,\beta)}((t-x)^2;x,p,q)\\&&~~ \leq \sup_{0\leq x \leq A}  \left( \dfrac{((p^{n}+q^{n})[m]_{p,q}-\beta)^2}{([n]_{p,q}+\beta)^2} \right)x^2
+\left( \dfrac{[n+m]_{p,q}(2\alpha+p^{n-1})}{([n]_{p,q}+\beta)}-2 \alpha\right) \frac{b_nx}{([n]_{p,q}+\beta)}\\&&\hspace{5 cm}+\dfrac{\alpha^2 b_n^2}{([n]_{p,q}+\beta)^2} \\&& \leq
\left( \dfrac{((p^{n}+q^{n})[m]_{p,q}-\beta)^2}{([n]_{p,q}+\beta)^2} \right)A^2
+\left( \dfrac{[n+m]_{p,q}(2\alpha+p^{n-1})}{([n]_{p,q}+\beta)}-2 \alpha\right) \frac{b_nA}{([n]_{p,q}+\beta)}\\&&\hspace{5 cm}+\dfrac{\alpha^2 b_n^2}{([n]_{p,q}+\beta)^2} \\&& ~~ :=B_{n,p,q}(\alpha,\beta).
\end{eqnarray*}
Thus,
\begin{eqnarray*}
|C_{n,m}^{\alpha,\beta}(f;x,p,q)-f(x)|  \leq M \Biggl( \left|\dfrac{[n+m]_{p,q}}{[n]_{p,q}+\beta}-1\right| A+\dfrac{\alpha b_n}{[n]_{p,q}+\beta}\Biggl)+ 
\omega(f^{'};\delta)\Biggl[ (B_{n,p,q}(\alpha,\beta))^{1/2}+\dfrac{1}{\delta}B_{n,p,q}(\alpha,\beta)\Biggl].
\end{eqnarray*}
Choosing, $\delta :=(B_{n,p,q}(\alpha,\beta))^{1/2} $, we get the desired result.
\end{proof}
\begin{flushleft}
\textbf{Example:}
\end{flushleft}
\textbf{Approximation of $f(x)=x^2$}\\ Approximation of $x^2 \in C_{1+x^2}[0,\infty)$ function within $[0,0.1]$,  obtained for $n=10$, 8 with $m=1$ and $b_n=(log~ n)^2$ is represented by the following figure.
\begin{figure}[h!]
\begin{center}
\includegraphics[scale=0.8]{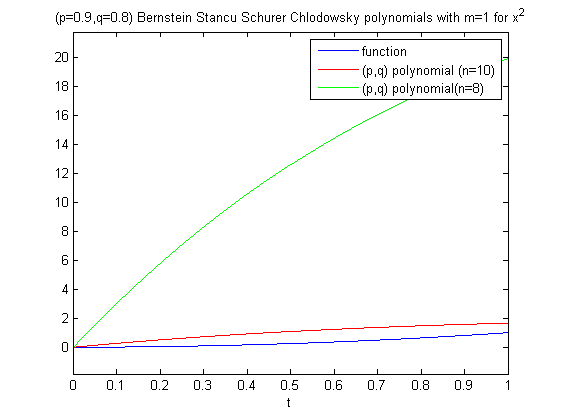}
\end{center}
\end{figure}
\pagebreak

It is clear from the figure that approximation is readily obtained for lower values of $n$ using $(p,q)$ variant of Bernstein-Stancu-Schurer-Chlodowsky polynomials in small dimension as it is required in quantum physics.
\begin{flushleft}
\textbf{Conflict of Interest:} The authors declare that they have no competing interests regarding the publication of this manuscript.
\end{flushleft}
\begin{flushleft}
\textbf{References}
\end{flushleft}

\end{document}